\theoremstyle{definition} 
 \newtheorem{definition}{Definition}[section]
\theoremstyle{plain}      
 \newtheorem{proposition}[definition]{Proposition}
 \newtheorem{theorem}[definition]{Theorem}
 \newtheorem{corollary}[definition]{Corollary}
 \newtheorem{lemma}[definition]{Lemma}
\newtheorem*{theorem*}{Theorem}
\newcommand{\Z}{\mathbb Z}
\newcommand{\R}{\mathbb R}
\newcommand{\C}{\mathbb C}
\newcommand{\CP}[1]{\mathbb {CP}^#1}
\newcommand{\la}{\langle}
\newcommand{\ra}{\rangle}
\renewcommand{\P}{\mathbb {P}}
\newcommand{\bomega}{\bm \omega}
\renewcommand{\phi}{\varphi}
\newcommand{\bphi}{\bm \phi}
\newcommand{\SL}{\mathrm{SL}}
\newcommand{\GL}{\mathrm{GL}}
\newcommand{\PGL}{\mathrm{PGL}}
\newcommand{\PU}{\mathrm{PU}}
\newcommand{\SU}{\mathrm{SU}}
\newcommand{\SO}{\mathrm{SO}}
\newcommand{\Sp}{\mathrm{Sp}}
\newcommand{\I}{\mathrm{I}}
\newcommand{\II}{\mathrm{II}}
\newcommand{\III}{\mathrm{III}}
\newcommand{\IV}{\mathrm{IV}}
\newcommand{\Lg}{\mathfrak{g}}
\newcommand{\Lk}{\mathfrak{k}}
\newcommand{\Lm}{\mathfrak{m}}
\author{E. Falbel, A. Guilloux, P. Will}
\title{A Hilbert metric for bounded symmetric domains}
\begin{document}

\maketitle

\begin{abstract}
Bounded symmetric domains carry several natural invariant metrics, for 
example the Carathéodory, Kobayashi or the Bergman metric. We define 
another natural metric, from generalized Hilbert metric defined in 
\cite{FalbelGuillouxWill}, by considering the Borel embedding of the 
domain as an open subset of its dual compact Hermitian symmetric space 
and then its Harish-Chandra realization in projective spaces.

We describe this construction on the four classical families of bounded 
symmetric domains and compute both this metric and its associated 
Finsler metric. We  compare it to Carathéodory and Bergman 
metrics and show that, except for the complex hyperbolic space, those 
metrics differ.
\end{abstract}

\section{Introduction}

\subsection{Bounded symmetric domains and the generalized Hilbert metric}

Bounded domains in $\C^n$ are naturally equipped with invariant metrics 
under their biholomorphism group \cite{JP13}. Important invariant 
metrics include the Bergman, Kobayashi and Carathéodory metrics. 
Among bounded domains, the \emph{symmetric} ones are homogeneous and, 
for each point in the domain, admit a holomorphic involution whose 
differential is the negative identity map. Those domains are ubiquitous. 
Four families of bounded symmetric domains are named \emph{classical bounded 
symmetric domains} as their biholomorphism groups are contained in the 
family of classic semi-simple Lie groups. Explicit computations of 
invariant metrics for general domains are not easy but for symmetric 
domains one has closed formulas (Kobayashi and Carathéodory metrics coincide), 
see \cite{JP13} and Section \ref{coro:infinitesimal-metric-I}.

Those bounded symmetric domains correspond to Hermitian symmetric 
spaces, see Section \ref{ssec:BSD-HSS}. In the real case, symmetric spaces
can be embedded as open convex subsets of a real projective space, and 
as such inherit a Hilbert metric. This metric usually differs from the
natural Riemannian metric, except for real hyperbolic spaces \cite{BridsonHaefliger}.
The classical symmetric domains, as described in Section \ref{sec:BSD}, are
convex subsets of some complex affine space. As such, they could inherit
a Hilbert metric. However, the biholomorphism group does not act projectively;
this metric would not be invariant. 

The goal of this paper is to define an invariant Hilbert metric for classical 
bounded symmetric domains using \cite{FalbelGuillouxWill}, to compute it and 
to compare it to the classical metrics.
The definition of the generalized Hilbert metric in \cite{FalbelGuillouxWill}
involves a subset $\Omega$ of projective space and a subset $\Lambda$ of 
its dual projective space that verify a non-vanishing condition: 
for any $[\phi]\in \Lambda$ and $[v]\in \Omega$,
we have $\phi(v)\neq 0$, see Section \ref{section:Hilbert}. 
Then, for $\omega,\omega'\in \Omega$, the following formula is well-defined:
\begin{equation}\label{eq:Hilbert}
d_\Lambda (\omega,\omega') = \ln\left(\max \left\{
            \left|\left[\phi,\phi',\omega,\omega'\right]\right| \textrm{ for }
            \phi,\phi'\textrm{ in } \Lambda\right\}\right).
\end{equation} 
This defines a semi-metric\footnote{It is reflexive, satisfies the triangular 
inequality but may fail to separate points.} \cite{FalbelGuillouxWill} that 
is naturally invariant under projective transformations preserving both 
subsets. 

In order to apply the definition to the case of bounded symmetric domains and 
compute the resulting distance, one has to:
\begin{enumerate}
    \item realize the bounded symmetric domains as subsets $\Omega$ of a projective 
    space;
    \item define a suitable subset $\Lambda$ of the dual projective space;
    \item compute the formula (\ref{eq:Hilbert}) using the transitivity
    of the transformation group of the domain.
\end{enumerate} 
The first two points will rely on the classical theory of Borel and Harish-Chandra
embeddings, see Section \ref{sec:BSD}. The last point will follow from a 
normalization of pairs of points given by the singular value decomposition.
At the end, for a classical bounded symmetric domain $D$, we will have defined a 
metric $d_D$, whose infinitesimal metric is a Finsler metric. We will explicitly
describe those metrics in Sections \ref{sec:formula-type-I}, \ref{sec:formula-type-II-III}
and \ref{sec:formula-type-IV} through a case-by-case analysis of the four families.
This will give in particular the following Theorem \ref{thm:main}:
\begin{theorem*}
    For any classical bounded symmetric domain $D$, the semi-metric $d_D$ 
    is an actual metric, invariant by the automorphism group $\mathrm{Aut}(D)$ 
    and comes from a Finsler infinitesimal metric.
\end{theorem*}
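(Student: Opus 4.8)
The plan is to deduce all three assertions from the general properties of the semi-metric $d_\Lambda$ recalled in \eqref{eq:Hilbert}, combined with the explicit formulas obtained through the singular value normalization carried out in Sections \ref{sec:formula-type-I}, \ref{sec:formula-type-II-III} and \ref{sec:formula-type-IV}.

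Invariance is the easiest point and requires no computation. Under the Borel and Harish-Chandra realizations of Section \ref{sec:BSD}, the automorphism group $\mathrm{Aut}(D)$ acts on the realized domain $\Omega$ by restriction of projective transformations that, by construction, preserve both $\Omega$ and the dual subset $\Lambda$. Since $d_\Lambda$ is invariant under exactly such projective transformations (as recalled below \eqref{eq:Hilbert}), the induced distance $d_D = d_\Lambda|_{\Omega \times \Omega}$ is $\mathrm{Aut}(D)$-invariant.

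The substantive point is that $d_D$ separates points; this is the only metric axiom not already guaranteed for the semi-metric $d_\Lambda$. The strategy is to exploit transitivity: given $\omega \neq \omega'$ in $D$, first move one of them to the base point by an automorphism, then use the stabilizer of the base point to apply the singular value decomposition and bring the pair into the normal form used in the case-by-case analysis. In this normal form the maximum over $\phi, \phi' \in \Lambda$ in \eqref{eq:Hilbert} can be evaluated explicitly, and the resulting closed formula shows that the maximal cross-ratio modulus is strictly larger than $1$, whence $d_D(\omega,\omega') > 0$. Concretely, one exhibits an admissible pair $(\phi,\phi')$ realizing a modulus $>1$ in terms of the nonzero singular values that distinguish $\omega$ from $\omega'$. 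I expect this to be the main obstacle: reducing the maximum over the whole infinite family $\Lambda$ to a computable optimum is precisely what the singular value normalization accomplishes, and verifying strict positivity has to be carried out uniformly across the four families, which is why the argument is organized as a case-by-case computation.

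Finally, to recognize the infinitesimal metric as Finsler, the plan is to differentiate $d_D$ along curves, computing $F_D(\omega, v) = \lim_{t \to 0^+} t^{-1} d_D(\omega, \omega + tv)$ from the same explicit formulas, where $\omega + tv$ is read in the bounded affine model furnished by the Harish-Chandra embedding. The normal form again reduces this limit to a manageable expression in the singular values of $v$, from which one reads off that $F_D(\omega, \cdot)$ is continuous, positively homogeneous of degree one, and convex on each tangent space; positivity of $F_D(\omega,v)$ for $v \neq 0$ follows from the separation property established above. Assembling these properties across the four families yields a genuine Finsler infinitesimal metric, completing the proof.
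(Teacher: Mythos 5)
Your proposal follows essentially the same route as the paper's proof of Theorem \ref{thm:main}: invariance from the projective invariance of the generalized Hilbert semi-metric, reduction of a pair of points to the normal form $(0,\Sigma_{p,q}(\sigma_1,\ldots,\sigma_p))$ via the singular value decomposition (Lemmas \ref{lem:pairs_in_D-I}--\ref{lem:pairs_in_D-IV}), explicit evaluation of the maximum over the Shilov boundary giving closed distance formulas whose strict positivity for $\omega\neq\omega'$ yields separation, and differentiation of those formulas to obtain the Finsler norm, all organized case by case over the four families. The one step you flag but do not execute — identifying the maximizing pair in $\Lambda$, which the paper settles by observing that $xZ^*$ is normal with the same singular values as $x$ when $ZZ^*=I_p$, so the maximum is attained at $Z_\pm=\pm\Sigma_{p,q}(1,\ldots,1)$ — is indeed the technical heart, but your outline correctly locates it and the rest of your plan matches the paper's argument.
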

	Except if $D$ is a complex hyperbolic space  this metric is neither the 
    Carathéodory nor the Bergman infinitesimal metric. The case of the complex hyperbolic spaces is particular. In this case, due to 
the rank one situation,
all metrics coincide up to a multiplicative factor. This was already observed in 
\cite[Lemma 4.3]{Zimmer-unitball} and \cite[Prop 3.4]{FalbelGuillouxWill}

 In order to better present the 
strategy before going into technicalities, we describe the organization following 
a simple example: the bidisc.

\subsection{A special case: the bidisc}\label{ssec:bidisc}

We explain the steps above on the example of the bidisc as a guideline, referring along the way to the general statements in the paper. Let
$D = U^2$ be the bidisc, where $U =\{z\in \C, \, |z|<1\}$. Then,
 $U$ has a unique 
$\PU(1,1)$-invariant metric which is the hyperbolic metric 
and coincides with the Carathéodory, the Bergman metric and 
the generalized Hilbert metric (see \cite{FalbelGuillouxWill}).
The automorphism group is generated by the exchange of the factors
and the action of $\PU(1,1)$ on either factor.

The Carathéodory and Bergman metric correspond respectively to the 
$L^\infty$ and $L^2$ metrics on $D$ (see for instance sections 56 
and 57 in Chapter 5 of \cite{Shabat}, or \cite{JP13}). The first step
of our strategy is to understand a projective embedding of $D$.

\subsubsection{Borel and Harish-Chandra embeddings}\label{ssec:bidisc1}

We give a down-to-earth presentation of the projective embedding of $D$. 
We refer the reader to \cite{wolf} and Section \ref{sec:Borel} below for more general information. The Plücker embedding of 
$\CP{1}\times \CP{1}$ in $\CP{3}$ is described in homogeneous coordinates by 
\begin{equation*}
 ([x:y],[u:v]) \longmapsto [xu:yu:xv:yv].
\end{equation*}
Its image is the quadric
$$Q = \{[x_0,x_1,x_2,x_3], x_0 x_3 - x_1x_2 = 0\}.$$
Viewing the disk as the subset $U\simeq \{[1:z],|z|<1\}$ of $\CP{1}$, we identify 
any $z\in U$ to $[1,z] \in \CP{1}$. We can restrict the Plücker embedding to 
$D=U^2$. This gives the map
    $$E: \left\{ \begin{matrix} D & \to & \CP{3}\\
        (z,w) & \longmapsto &[1:z:w:zw].\end{matrix}\right.
    $$
The biholomorphisms of $D$ extend to projective transformations of $\CP{3}$: 
one may check that the image of the biholomorphism group is the connected 
component in the orthogonal group of the quadratic form $x_0 x_3 - x_1x_2$. 
Note also that this quadratic form is non-degenerate, so identifies $\CP{3}$ 
to its dual projective space $(\CP{3})^\vee$. We call $F : D\to (\CP{3})^\vee$ the composition 
of the map $E$ and this duality. In coordinates, for any $(z,w)\in D$, 
the form $F(z,w)$, that we denote by $F_{(z,w)}$, is given up to a 
multiplicative constant by:
$$F_{(z,w)} : (x_0,x_1,x_2,x_3) \longmapsto x_0(zw) + x_3 - z x_2 - w x_1.$$

In general, one can realize bounded symmetric domains as subsets of a 
projective space. This comes from the fact that a bounded symmetric domain
is the image under the Borel embedding of a Hermitian symmetric space, 
and the Harish-Chandra embedding maps them into a projective space -- namely
a Grassmannian space.
Under this embedding, the automorphism group of $D$ is sent to a subgroup of
the group of projective transformations. Moreover, there are in fact a pair
of such embeddings, in two dual projective spaces. We review the necessary
background on classical Hermitian symmetric spaces and their 
links to bounded symmetric domains in Section \ref{ssec:BSD-HSS}; we then
describe the embeddings $E$ and $F$ in Section \ref{ssec:projective-embeddings}.
More details on the construction are then given in Section \ref{sec:Borel} for
the interested reader. Note that the maps $E$ and $F$ naturally extend to 
the boundary of $D$, and this holds in the general case, see Section 
\ref{ssec:projective-embeddings}.

\subsubsection{The Shilov boundary as the set $\Lambda$}\label{ssec:bidisc2}

The second step is to define a subset $\Lambda$ in the dual projective space.
We define $\Lambda$ as the image under the "dual" map $F$ of a classical and 
very important subset of the boundary of $D$: the \emph{Shilov boundary}, see 
Section \ref{sec:BSD}.
In the case of the bidisc, the Shilov boundary is the torus 
$\partial_S D:= \{(z,w),\, |z|=|w|=1\}$, see \cite{Clerc}.

We can check that, for all $p=(z,w)\in D$ and $\xi = (z_0,w_0)\in \partial_D S$, 
we have 
$$F_\xi(E(z,w)) = (z-z_0)(w-w_0) \neq 0.$$
So, linear forms in our $\Lambda$ do not vanish on the image $E(D)$. Hence, 
we can define the Hilbert metric $d_D$ by, for $p$, $p'$ in $D$:
$$d_D(p,p') = \ln \left( \max_{\xi,\xi' \in \partial_S D}\left|\frac{F_\xi(E(p))F_\xi'(E(p'))}{F_\xi(E(p'))F_\xi'(E(p))}\right|\right).$$

In general, the Shilov boundary of the classical domains are well-known, see Section 
\ref{ssec:BSD-HSS}. Using the maps $E$ and $F$ defined as above, we will always define the set $\Lambda$ as the image under $F$ of the Shilov boundary. We check in Section 
\ref{ssec:projective-embeddings} that the non-vanishing condition is always fulfilled.
We could have taken for $\Lambda$ the image by $F$ of the whole boundary of $D$. However,
due to the maximality properties of the Shilov boundary, see Definition \ref{def:Shilov},
it would not have changed the metric $d_D$. So we prefer to restrict to the smallest
$\Lambda$.

\subsubsection{Transitivity and computation of the Hilbert metric}\label{ssec:bidisc3}

In order to give an actual formula for $d_D$, we have to determine for which 
choice of $(\xi,\xi')\in \partial_S D$ the maximum is obtained. For the bidisc, this 
can be done directly, but we prefer to use the action of the automorphism group
to draw further the analogy with the general case. 

Indeed, given any two points $p$, $p'$ in the bidisc $D$, there is an automorphism 
that sends $p$ to $o:=(0,0)$ and $p'$ to $o'=(x,y)$ where $1>x\geq y\geq 0$. So we just need
to compute $d_D(o,o')$.  In the following computation, we denote by $\xi=(a,b)$ and $\xi'=(a',b')$ two points in the Shilov boundary $\partial_S D$.
\begin{eqnarray*}
    d_D (o,o') & = &\max_{\xi,\xi'\in \partial_S D} \ln \left|\frac{F_{\xi}(o)F_{\xi'}(o')}{F_{\xi}(o')F_{\xi'}(o)}\right|\\
    & = &\max_{\xi,\xi'} \ln \left|\frac{(-a)(-b)(x-a')(y-b')}{(x-a)(y-b)(-a')(b')}\right|.
\end{eqnarray*}
Recall that all the moduli of $a,b,a'$ and $b'$ are equal to $1$. One may separate the ones with $x$ and the ones with $y$. One can then determine that the maximum is attained for $(a,b)=(1,1)$ and $(a',b')=(-1,-1)$.
We recover this way the sum of the hyperbolic metrics on both factors:
\begin{align*}
    d_D (o,o') & = \ln \left|\frac{x+1}{x-1}\right| + \ln \left|\frac{y+1}{y-1}\right|\\
    &= d_U(z,z')+d_U(w,w').
\end{align*}
This discussion proves, for the bidisc, the formula for the Hilbert metric:
\begin{proposition}\label{prop:Hilbert-D2}
    The generalized Hilbert metric on $D^2$ is the the sum of the hyperbolic metrics on each factor.  
\end{proposition}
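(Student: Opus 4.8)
The plan is to follow exactly the three-step recipe set out in the introduction, but to carry the computation of the maximum through explicitly rather than quote it. The statement asserts that on the bidisc $D=U^2$ the generalized Hilbert metric decomposes as a sum of two hyperbolic factors, so the goal is to evaluate the formula
$$d_D(p,p') = \ln\left(\max_{\xi,\xi'\in \partial_S D}\left|\frac{F_\xi(E(p))F_{\xi'}(E(p'))}{F_\xi(E(p'))F_{\xi'}(E(p))}\right|\right)$$
and identify the right-hand side with $d_U(z,z')+d_U(w,w')$, where $d_U$ is the hyperbolic metric on the disc.

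First I would use invariance to normalize the pair of points. Since $\mathrm{Aut}(D)$ acts transitively on $D$, and the stabilizer of the origin acts transitively enough on the complementary directions, any pair $(p,p')$ can be moved to $(o,o')$ with $o=(0,0)$ and $o'=(x,y)$, $1>x\ge y\ge 0$; by invariance of $d_D$ under $\mathrm{Aut}(D)$ it suffices to compute $d_D(o,o')$. This is the place where the transitivity of the automorphism group — and, in the general case, the singular value decomposition giving the normal form of a pair of points — does the real work; here it amounts to the classical fact that $\PU(1,1)$ acts transitively on $U$ and the Blaschke factor argument on each disc factor.

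Next I would substitute $\xi=(a,b)$, $\xi'=(a',b')$ with $|a|=|b|=|a'|=|b'|=1$ into the explicit form $F_{(z,w)}:(x_0,x_1,x_2,x_3)\mapsto x_0(zw)+x_3-zx_2-wx_1$, yielding
$$d_D(o,o')=\max_{\xi,\xi'}\ln\left|\frac{(-a)(-b)(x-a')(y-b')}{(x-a)(y-b)(-a')(-b')}\right|.$$
The crucial observation is that the expression factors into an $a,a',x$-part and a $b,b',y$-part, so the maximization over the torus separates into two independent one-variable problems on the unit circle. Each reduces to maximizing $|a/(x-a)|\cdot|(x-a')/a'|$ over $|a|=|a'|=1$, which is the standard Hilbert-metric-on-the-disc computation whose extremum is attained at the boundary points $\pm 1$ aligned with the real segment. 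I expect the main obstacle to be making this separation and the extremization rigorous rather than merely plausible: one must check that the two factors can be maximized simultaneously (that the maximizing $(a,b)$ and $(a',b')$ for the two sub-problems are jointly admissible on the torus, which they are since the torus is a product), and that the absolute-value extremum on the circle is genuinely achieved at $a=b=1$, $a'=b'=-1$ rather than at some interior-to-the-arc point.

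Finally, plugging in the maximizers $(a,b)=(1,1)$ and $(a',b')=(-1,-1)$ gives
$$d_D(o,o')=\ln\left|\frac{x+1}{x-1}\right|+\ln\left|\frac{y+1}{y-1}\right|,$$
and recognizing each summand as the hyperbolic distance from $0$ to $x$ (respectively $0$ to $y$) on the disc completes the identification $d_D(o,o')=d_U(z,z')+d_U(w,w')$. By the invariance established in the normalization step this holds for every pair, proving the proposition. The whole argument is a clean finite-dimensional optimization once the normal form is in place, so the only genuine care needed is in the circle extremization and in verifying that the separated maxima are compatible.
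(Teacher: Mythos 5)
Your proposal is correct and follows essentially the same route as the paper: normalize the pair to $\bigl((0,0),(x,y)\bigr)$ using transitivity of $\mathrm{Aut}(D)$, observe that the cross-ratio factors into an $(a,a',x)$-part and a $(b,b',y)$-part, and maximize each over the circle at $a=b=1$, $a'=b'=-1$. The only difference is that you spell out the separation and the circle extremization (which the paper dismisses with ``one can then determine''), and your careful remark that the two maximizations are jointly admissible because the Shilov boundary is a product torus is a correct and welcome precision, not a deviation.
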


In the general case, one uses classically the singular value decomposition to describe pair of points in the classical bounded symmetric domains, see Section \ref{sec:SVD}. The determination of the maximum and its consequences are then done in the last Section \ref{section:Hilbert}. 

\subsection*{Acknowledgements}
We thank Martin Deraux for discussions.  The three authors are partially funded by  ANR-23-CE40-0012.

\section{Classical bounded symmetric domains \label{sec:BSD}}

Let $\Omega\subset \C^n$ be a bounded domain.
There are several situations where a natural metric can be associated to it.
A very general definition is that of the Bergman metric on any bounded domain.
Other definitions of invariant metrics include the Carathéodory and Kobayashi metrics.
The construction is such that the biholomorphism group is contained in the 
isometry group of those metrics. The particular case where $\Omega$ is a bounded 
homogeneous domain has been studied for a long time.  
It contains the important class of non-compact Hermitian symmetric spaces.  
These Riemannian spaces, classified by Cartan, can be embedded as bounded domains 
which contain the origin, are stable under 
the circle action  and which turn out to be convex (see \cite{wolf} for a 
thorough exposition).  In the case of bounded symmetric domains  the Carathéodory 
and Kobayashi metrics coincide.

The group of biholomorphisms of a bounded symmetric domain is transitive and 
can be extended to the boundary. But its action on the boundary is not transitive 
except in the case of the complex ball.  On the other hand, the isotropy group 
at the origin acts by linear maps of $\C^n$ and, moreover, it acts transitively 
on the Shilov boundary of $\Omega$:
\begin{definition}\label{def:Shilov}
Let $\Omega\subset \C^n$ be a bounded domain. The Shilov boundary of 
$\Omega$ is the smallest closed subset  $S\subset \partial \Omega$ such 
that, for every holomorphic function $f$ defined on $\Omega$ which extends 
continuously to the boundary,  we have
$$
\vert f(z)\vert \leq \max \{  \vert f(w)\vert\, , w\in S \}.
$$
\end{definition}
We will not use much properties of the Shilov boundary, except its actual description
for bounded symmetric domains, see Section \ref{ssec:BSD-HSS}. For a general
presentation, we refer to \cite{Clerc}.

We now present the classical cases, together with the associated Hermitian symmetric
spaces and the Borel and Harish-Chandra embeddings.

\subsection{Bounded symmetric domains and Hermitian symmetric spaces}\label{ssec:BSD-HSS}

We now review the needed materials to extend the two dual projective embeddings 
of the bidisc (see Section \ref{ssec:bidisc1}) to all four families of classical bounded symmetric domains.

\subsubsection{Hermitian symmetric spaces}

General references for this section are \cite{Helgason, wolf} and \cite{Clerc} 
for the Shilov boundary.
A Riemannian symmetric space $X$ is a Riemannian manifold such that, 
at each point $x\in X$, there exists an isometric involution 
$\sigma_x$ with $d{\sigma_x}=-Id$. In the case $X$ is a Hermitian 
manifold and $\sigma_x$ is a holomorphic map we say that
$X$ is a Hermitian symmetric space.  

Any simply connected Hermitian symmetric space is a product 
$X= X_e\times X_-\times X_+$ where $X_e$ is an Euclidean space, 
$X_-$ is a compact Hermitian symmetric space and $X_+$ is a non-compact 
Hermitian symmetric space (Proposition 4.4 in \cite[Chapter VIII]{Helgason}).
Moreover, $X_-$ and $X_+$ themselves are products of irreducible symmetric 
spaces of compact and non-compact type respectively, with each factor being of 
the form $G/K$ where $G$ is a connected simple Lie group with trivial center 
and $K$ a maximal compact subgroup of $G$. Be aware that one does not 
consider Euclidean spaces as non-compact type symmetric spaces.  
Hermitian symmetric spaces of compact type are projective algebraic manifolds.

There exists a duality between Hermitian symmetric spaces of compact and 
non-compact type. The construction goes as follow: start with $X_0=G_0/K$ 
an irreducible non-compact Hermitian space and write the Lie algebra of its
holomorphic automorphism group as $\Lg_0=\Lk\oplus \Lm$; here $\Lk$ is a 
maximal compact sub-algebra and the decomposition corresponds to the $1$ 
and $-1$ eigenspaces of a Cartan involution of $\Lg_0$. The {\it compact dual} 
of $\Lg_0$ is $\Lg_c=\Lk\oplus \sqrt{-1}\,\Lm$ (we write $G_c$ for the 
corresponding Lie group).  In fact, $\Lg_c$ is a maximal compact sub-algebra  
of the complexification $\Lg^\C$.  One obtains that $X=G_c/K$ is a compact 
Hermitian symmetric space which is called the dual of  $X_0=G_0/K$.
The Borel embedding theorem states that a non-compact Hermitian symmetric 
space has an embedding as an open subset of its compact dual. 
\begin{theorem*}[Borel embedding theorem] Let $G^\C$ be the connected 
    Lie group corresponding to $\Lg^\C$.  Then the compact Hermitian 
    space $X_c=G_c/K$ is biholomorphic to $G^\C/P$, where $P\subset G^\C$ 
    is a complex subgroup. The embedding $G_0\to G^\C$ induces an open 
    embedding $X_0\to X_c$ realizing $X_0$ as an open subset in its 
    compact dual $X_c$.
\end{theorem*}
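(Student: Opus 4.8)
The plan is to establish both assertions of the theorem by the standard infinitesimal (Lie-algebraic) method: one chooses the correct parabolic $P$, identifies $X_c$ with $G^\C/P$ by a transitivity argument, and then compares tangent spaces of orbits to get the open embedding of $X_0$. First I would complexify, writing $\Lg=\Lg^\C$, and exploit that $X_0$ is Hermitian: fix the central element $Z_0\in\Lk$ whose adjoint action is the complex structure $J$ on $\Lm$, extend $\mathrm{ad}(Z_0)$ complex-linearly to $\Lm^\C$, and decompose $\Lm^\C=\Lm^{+}\oplus\Lm^{-}$ into its $(+\sqrt{-1})$- and $(-\sqrt{-1})$-eigenspaces. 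The structural facts I would record, all standard consequences of the Hermitian condition, are that each $\Lm^{\pm}$ is an abelian subalgebra, that $[\Lk^\C,\Lm^{\pm}]\subseteq\Lm^{\pm}$, and that the conjugation $\tau$ of $\Lg$ fixing $\Lg_0$ interchanges $\Lm^{+}$ and $\Lm^{-}$ (since $Z_0$ is real, $\tau$ commutes with $\mathrm{ad}(Z_0)$ and reverses the sign of the eigenvalue). Consequently $\mathfrak{p}:=\Lk^\C\oplus\Lm^{-}$ is a subalgebra, and I would take $P$ to be the corresponding parabolic subgroup of $G^\C$.

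Next I would identify $X_c=G_c/K$ with $G^\C/P$. Transitivity of $G_c$ on $G^\C/P$ follows from $\Lg_c+\mathfrak{p}=\Lg$: indeed $\mathfrak{p}\supseteq\Lk^\C$, and $\sqrt{-1}\,\Lm$ projects isomorphically onto $\Lm^{+}$, so $\sqrt{-1}\,\Lm+\Lm^{-}=\Lm^\C$ and hence $\Lg_c+\mathfrak{p}=\Lk^\C+\Lm^\C=\Lg$. This makes the $G_c$-orbit of the base point $eP$ open; since $G_c$ is compact the orbit is also closed, hence all of the connected space $G^\C/P$. The stabilizer has Lie algebra $\Lg_c\cap\mathfrak{p}=\Lk$ (because $\sqrt{-1}\,\Lm\cap\Lm^{-}=0$), and a short argument on connected components identifies the stabilizer with $K$, yielding the first claim.

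For the open embedding I would run the identical computation for $G_0$. The orbit of $eP$ is open exactly when $\Lg_0+\mathfrak{p}=\Lg$; this holds because the restriction to $\Lm$ of the projection $\Lm^\C\to\Lm^{+}$ is an $\R$-linear isomorphism (its kernel is $\Lm\cap\Lm^{-}=0$), so $\Lm+\Lm^{-}=\Lm^\C$ and therefore $\Lg_0+\mathfrak{p}=\Lk^\C+\Lm^\C=\Lg$. Dually $\Lg_0\cap\mathfrak{p}=\Lk$, so the orbit map descends to a local biholomorphism $X_0=G_0/K\to G^\C/P$; it is holomorphic because the isomorphism $\Lm\to\Lm^{+}$ on tangent spaces intertwines $J$ with multiplication by $\sqrt{-1}$, matching the invariant complex structures on both sides.

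The remaining and genuinely delicate point is to upgrade this local biholomorphism to a global \emph{embedding}, since an open immersion of orbits need not be injective. Here I would invoke the Harish-Chandra coordinates: the big cell $N^{+}P/P$ with $N^{+}=\exp(\Lm^{+})$ is a dense open chart of $G^\C/P$ biholomorphic to the vector space $\Lm^{+}$ via $\exp$. The strategy is to show that the $G_0$-orbit of $eP$ lands inside this cell and that, read in the coordinate $\Lm^{+}$, the induced map $X_0\to\Lm^{+}$ is precisely the Harish-Chandra embedding, whose image is a bounded (convex) domain and is therefore manifestly injective and open. Controlling the orbit globally in this way, rather than merely infinitesimally, is the main obstacle, and it is exactly the step where the boundedness and convexity of the realization recalled in Section \ref{sec:BSD} do the real work.
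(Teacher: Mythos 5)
First, a point of comparison: the paper never proves this statement. It is quoted as a classical theorem (with \cite{wolf} and \cite{Helgason} as the intended sources), and what the paper actually relies on is the explicit Grassmannian form of the embeddings worked out case by case in Section \ref{sec:Borel}, where for Type $\I$ both injectivity and openness are transparent: a negative $q$-subspace is transverse to $W^+$, hence the graph of a unique matrix $Z$ with $Z^*Z-I_q<0$. So your proposal must be judged against the standard Lie-theoretic proof, and its first three steps are exactly that proof, correctly executed: the eigenspace decomposition $\Lm^\C=\Lm^+\oplus\Lm^-$, the parabolic subalgebra $\mathfrak{p}=\Lk^\C\oplus\Lm^-$, transitivity of $G_c$ on $G^\C/P$ by the open--closed argument with $\Lg_c\cap\mathfrak{p}=\Lk$ (the ``argument on components'' being simple connectedness of the flag manifold $G^\C/P$), and openness of the $G_0$-orbit from $\Lg_0+\mathfrak{p}=\Lg$.

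The gap is in your final step, for two reasons. First, ``bounded (convex) image, therefore manifestly injective'' is a non sequitur: an open holomorphic immersion with bounded image need not be injective (compose a non-injective holomorphic covering of an annulus with a bounded chart). Second, and more seriously, injectivity of the Harish-Chandra coordinate map is not an independent fact you may import: writing $E$ for the orbit map $G_0/K\to G^\C/P$, one has $E(g_1K)=E(g_2K)$ if and only if $g_2^{-1}g_1\in G_0\cap P$, and since the big cell chart is itself injective, ``the Harish-Chandra map is injective'' and ``$E$ is injective'' are both literally the statement $G_0\cap P=K$. Invoking the Harish-Chandra embedding theorem here is therefore circular: its proof contains precisely the point you are missing. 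What closes the gap is a direct proof of $G_0\cap P=K$, and it needs no boundedness at all: for $g\in G_0\cap P$ one has $\tau(g)=g$, and since $\tau(\mathfrak p)=\Lk^\C\oplus\Lm^+$, the element $g$ lies in $K^\C N^-\cap K^\C N^+$ (where $N^\pm=\exp(\Lm^\pm)$); this intersection equals $K^\C$ because $N^+\cap P$ is a finite subgroup of a unipotent group, hence trivial; finally the Cartan involution is inner, $\theta=\mathrm{Ad}(\exp\pi Z_0)$, so it extends to $G^\C$ and fixes $K^\C$ pointwise, giving $g\in G_0^\theta=K$ (here the hypothesis that $G_0$ has trivial center is used). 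Alternatively one can follow Helgason and prove $G_0\subset N^+K^\C N^-$ by $\SL(2)$-reduction along strongly orthogonal restricted roots --- that is the ``real work'' you allude to --- but that lemma, like boundedness and convexity, belongs to the Harish-Chandra realization theorem, not to the Borel embedding; your proposal conflates the two and routes the easier statement through the harder one.
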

An important fact is that the connected automorphism $G_0$ of the non-compact 
Hermitian symmetric space is realized as a subgroup of automorphisms of 
the projective manifold $X_c$. Therefore it is realized as a subgroup of projective transformations.

\subsubsection{Bounded symmetric domains}

The link with bounded symmetric domains is that all Hermitian symmetric 
spaces of non-compact type can be realized as bounded symmetric domains.
\begin{definition}
A bounded domain $D\subset \C^n$ is a symmetric domain if, at each point 
$z\in D$, there exists a holomorphic involution $\sigma_z$ such that 
$d\sigma_z=-Id$.
\end{definition}
Each bounded domain carries a Bergman metric which is invariant under 
the automorphism group of $D$ (see \cite[Chapter VIII]{Helgason} or 
\cite[Chapter V]{Shabat}).  The Harish-Chandra theorem states that 
any Hermitian symmetric space is identified to a bounded symmetric 
domain $D$ equipped with its Bergman metric, and conversely:
\begin{theorem}[Harish-Chandra] A Hermitian symmetric space of 
    non-compact type is biholomorphic to a bounded symmetric domain 
    in an euclidean space which is embedded as a dense domain in the 
    compact dual Hermitian symmetric space.
\end{theorem}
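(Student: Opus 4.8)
The plan is to build directly on the Borel embedding theorem just recalled, producing the Harish-Chandra coordinate chart by hand. It suffices to treat the irreducible case $X_0=G_0/K$, the general statement following by taking products. The starting point is the structure of $\Lk$: for a Hermitian symmetric space the center of $\Lk$ is one-dimensional and contains a canonical element $Z_0$ such that $\mathrm{ad}(Z_0)$ restricts to the complex structure $J$ on $\Lm$, so that $J^2=-\mathrm{Id}$. Complexifying, I would diagonalize $\mathrm{ad}(Z_0)$ on $\Lg^\C$: since $Z_0$ is central in $\Lk$ and acts as $J$ on $\Lm$, the only eigenvalues are $+i$, $0$, $-i$, yielding $\Lg^\C = \Lm^+ \oplus \Lk^\C \oplus \Lm^-$ into the corresponding eigenspaces. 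The first key computation is that $\Lm^+$ and $\Lm^-$ are abelian, since a bracket of two elements of $\Lm^+$ would lie in the (nonexistent) $2i$-eigenspace; this exhibits $\Lg^\C$ as a $3$-graded Lie algebra on which $\Lk^\C$ normalizes both $\Lm^{\pm}$.

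Next I would realize the compact dual concretely. Taking $P$ to be the parabolic subgroup with Lie algebra $\mathfrak{p}=\Lk^\C \oplus \Lm^-$, the Borel embedding gives $X_c = G^\C/P$. The subgroup $M^+=\exp(\Lm^+)$ is abelian and unipotent, so $\exp$ maps $\Lm^+$ isomorphically onto $M^+$; and since $\Lm^+ \oplus \mathfrak{p} = \Lg^\C$ as vector spaces, the orbit map $Z \mapsto \exp(Z)P$ is a holomorphic embedding of the Euclidean space $\Lm^+ \cong \C^n$ onto the big Bruhat cell $M^+\cdot P/P$, which is open and dense in $X_c$. This produces exactly the Euclidean space densely embedded in the compact dual that the statement refers to.

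Then comes the Harish-Chandra embedding proper. I would show that the whole $G_0$-orbit of the base point $o=eK$ lies in this big cell, equivalently that every $g\in G_0$ admits a Gauss-type factorization $g=\exp(\zeta_+(g))\,k_\C\,\exp(\zeta_-(g))$ with $\zeta_{\pm}(g)\in\Lm^{\pm}$ and $k_\C\in K^\C$. The assignment $gK \mapsto \zeta_+(g)$ then defines a $G_0$-equivariant holomorphic injection $X_0 \hookrightarrow \Lm^+$, whose image is the candidate domain $D$: holomorphy follows because the chart $\exp(\Lm^+)P \to \Lm^+$ is biholomorphic and the Borel embedding is holomorphic, while injectivity and openness follow from equivariance and the fact that the differential at $o$ is an isomorphism. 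The geodesic symmetry of $X_0$ at $o$ transports to $-\mathrm{Id}$ on $\Lm^+$, a holomorphic involution with differential $-\mathrm{Id}$, so $D$ is automatically a symmetric domain once we know it is bounded.

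The main obstacle is precisely this boundedness of $D=\zeta_+(X_0)\subset\Lm^+$, since a priori the factorization controls neither the size of $\zeta_+$ nor even that every orbit point remains in the big cell. The cleanest route I would take is the polydisk (maximal flat) theorem: choosing a maximal system of strongly orthogonal noncompact roots $\gamma_1,\dots,\gamma_r$, the associated commuting copies of $\mathfrak{su}(1,1)$ embed a maximal polydisk $U^r$ in $X_0$ on which the Harish-Chandra coordinates are literally $r$ independent disk coordinates, hence of modulus less than $1$. Since $K$ acts on $\Lm^+$ by the linear representation $\mathrm{Ad}$ (because $[\Lk^\C,\Lm^+]\subseteq\Lm^+$), so that $\zeta_+(kg)=\mathrm{Ad}(k)\zeta_+(g)$, and since $G_0=KAK$ forces every point of $X_0$ to be $K$-conjugate to a point of the polydisk, the image $\zeta_+(X_0)$ is the $\mathrm{Ad}(K)$-saturation of a bounded set, hence bounded by compactness of $K$; this also shows the orbit never leaves the big cell. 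Assembling these pieces yields the biholomorphism $X_0\cong D$ with $D$ a bounded symmetric domain inside the Euclidean chart $\Lm^+$, the latter dense in $X_c$, as claimed.
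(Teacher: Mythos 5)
Your argument is correct in outline and is the classical Lie-theoretic proof of the Harish-Chandra theorem, but it takes a genuinely different route from the paper, which never proves the statement abstractly: the paper treats it as classical and instead substantiates it by explicit case-by-case realizations of the four classical families in its section on Borel and Harish-Chandra embeddings. There the chart is produced by linear algebra rather than by a Gauss factorization: a negative $q$-subspace $W_x^-\subset\C^{p+q}$ is transverse to $W^+$, hence is the graph of a linear map with matrix $Z_x$, and the domain $D^\I_{p,q}=\{Z\ \vert\ Z^*Z-I_q<0\}$ falls out of the signature condition, with boundedness immediate (all singular values satisfy $\sigma_i(Z)<1$), types $\II$ and $\III$ obtained by restriction, and type $\IV$ handled via $q$-isotropic $h$-negative lines on a quadric. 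Your route --- the eigenspace decomposition $\Lg^\C=\Lm^+\oplus\Lk^\C\oplus\Lm^-$ under $\mathrm{ad}(Z_0)$, the big cell $\exp(\Lm^+)\cdot P$ in $G^\C/P$, the factorization $G_0\subset M^+K^\C M^-$, and boundedness via the polydisk theorem for strongly orthogonal roots --- buys uniformity: it covers all Hermitian symmetric spaces at once, including the two exceptional ones that the matrix models cannot reach. What the paper's concrete approach buys is exactly what it needs downstream: the explicit dual pair of embeddings $E$ and $F$, the identification of $\mathrm{Aut}(D)$ with a group of projective transformations, and determinant formulas such as the one in Proposition \ref{prop:duality-explicit}, none of which the abstract proof delivers directly. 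Two points in your sketch deserve tightening. First, density of the big cell does not follow from the vector-space splitting $\Lm^+\oplus\mathfrak{p}=\Lg^\C$, which only gives openness; you need the Bruhat-cell (Zariski-openness) argument showing the complement is a proper subvariety of the connected compact $X_c$. Second, global injectivity of $gK\mapsto\zeta_+(g)$ is not a consequence of the differential at the base point being an isomorphism; it follows from the uniqueness of the Gauss factorization on the open set $M^+K^\C M^-$ (equivalently, from injectivity of the Borel embedding composed with the biholomorphic chart), and your reliance on the polydisk theorem --- itself a substantial input of Harish-Chandra --- should be flagged as such rather than folded silently into the $KAK$ step.
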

Among the Hermitian symmetric spaces, four families are special: 
the {\it classical Hermitian symmetric spaces}.  We will now describe 
the bounded symmetric domains corresponding to these Hermitian symmetric 
spaces, as well as their Shilov boundaries. We will provide in the next 
paragraph a short description of the way this realization is done.
To define these domains, we consider the groups:
\begin{eqnarray*}
\Sp(n,\R) = & \{M\in \GL(2n,\R)\ \vert\ {{M}}^TJ_nM=J_n\}\\
\SO^*(2n) = & \{M\in \SL(2n,\C)\ \vert\ M^TM=I_{2n}, \ {\overline{M}}^TJ_nM=J_n\ \},
\end{eqnarray*}
where $J_n= \begin{pmatrix} 0 & \I_n \\
						   -\I_n &0 
		   \end{pmatrix}$. 

For a Hermitian matrix $A$, the notation $A>0$ means that it is positive definite.

\begin{enumerate}
\item[{\bf Type}]{\bf $\I$:} For $1\leq p\leq q$, the symmetric space 
is $X^\I_{p,q} = \SU(p,q)/\mathrm{S}(\mathrm{U}(p)\times \mathrm{U}(q))$. 
The associated domain and its Shilov boundary are respectively given by: 
\begin{align}\label{eq:type-I}
D^\I_{p,q} =&\left\{\  Z\in M_{p,q}(\C)\ \vert \ I_{q}-Z ^*Z> 0\ \right\}\nonumber\\
\partial_S D^\I_{p,q}=&\left\{\  Z\in M_{p,q}(\C)\ \vert \ ZZ^*-I_p= 0\ \right\}.
\end{align}

\item[{\bf Type}]{\bf $\II$:} For $n\geq 5$, the symmetric space is 
$X^{\II}_{n} =\SO^*(2n)/\mathrm{U}(n)$; the associated domain and its 
Shilov boundary are given by the following construction, depending on 
the parity of $n$. Let $Z_0$ be the block-diagonal matrix with blocks 
$\left(\begin{smallmatrix} 0&1\\-1&0\end{smallmatrix}\right)$ and of 
rank $n$ if $n$ is even, $n-1$ if $n$ is odd. Then $Z_0Z_0^*-I_n$ is 
either $0$, if $n$ is even, or the diagonal matrix with diagonal entries 
$(0,\ldots, 0, -1)$ if $n$ is odd. The Shilov boundary is the orbit of 
$Z_0$ under the transformation group.
\begin{align}\label{eq:type-II}
	D^{\II}_{n}=&\left\{\  Z\in M_{n,n}(\C)\ \vert \ I_{n}-Z^* Z> 0, 
        \ Z^T=-Z\ \right\} \subset D^\I_{n,n}\\
	\partial_S D^{\II}_{n}=&\left\{\  Z\in M_{n,n}(\C)\ 
    \vert \ Z Z^*-I_n\ = Z_0Z_0^*-I_n, \ Z^T=-Z\ \right\}.\nonumber
\end{align}
Note that $\partial_S D^\II_n \subset \partial_S D^\I_{n,n}$ iff $n$ is even.

\item[{\bf Type}]{\bf $\III$:} For $n\geq 2$, the space is 
$X^\III_n = \Sp(n,\R)/\mathrm{U}(n)$; the associated domain and 
its Shilov boundary are given by: 
\begin{align}\label{eq:type-III}
	D^{\III}_{n}=&\left\{\  Z\in M_{n,n}\ \vert \ I_{n}- Z^* Z> 0,
     \ Z^T=Z\ \right\}\subset D^\I_{n,n}\\
	\partial_S D^{\III}_{n}=&\left\{\  Z\in M_{n,n}(\C)\ \vert \ Z Z^*- I_n=0,
     \ Z^T=Z\ \right\}\subset \partial_S D^\I_{n,n}.\nonumber
\end{align}

\item[{\bf Type}]{\bf $\IV$:} For $n\geq 2$, the space is 
$X^\IV_n = \SO^0(n,2)/\SO(n)\times \SO(2)$; the associated domain 
and its Shilov boundary are given by: 
\begin{align}\label{eq:type-IV}
D^\IV_n =& \left\{z\in M_{1,n}(\C)\ \vert \ z^*z<2\textrm{ and } 
z^*z<1+\left|\frac 12 z^Tz\right|^2\right\}\\
\partial_S D^{{\IV}}_{n} = &\left\{ e^{i\theta}x\textrm{ for }
x\in M_{1,n}(\R),\ x^Tx=2\textrm{ and }\theta \in \R\right\}.\nonumber
\end{align}
\end{enumerate}
We now describe explicitly the projective embeddings, as in the case of
the bidisc, see Section \ref{ssec:bidisc1}.

\subsubsection{Projective embeddings}\label{ssec:projective-embeddings}

We recall here classical material 
(see \cite{mok-MetricRigidity},\cite{these-Wienhard}) about Borel and 
Harish-Chandra embeddings, and their descriptions using Grassmannians 
and the Plücker map. For the interested reader we will provide a 
review of these facts in the next section.

For integers such that $p+q=n$, the two projective spaces 
$\P(\Lambda^p\C^n)$ and $\P(\Lambda^q\C^n)$ are dual to one another. 
Indeed, the choice of the determinant function defines a pairing 
$\la \cdot, \cdot\ra$ which is defined on pure tensors by
\begin{eqnarray}
 \Lambda^p\C^n\times \Lambda^q\C^n & \longrightarrow & \Lambda^n\C^n \overset{\det}{\sim} \C\nonumber\\
 (A,B) & \longmapsto & A\wedge B = \det(A,B). \label{eq:identification-dual}
\end{eqnarray}

This allows us to identify $\Lambda^q\C^n$ to $(\Lambda^p\C^n)^*$.  
In turn, we obtain a natural identification\footnote{After projectivization, 
the identification does not depend on the choice of the determinant.}
\begin{equation}\label{eq:duality}
    \P(\Lambda^q\C^n)\sim \P(\Lambda^p\C^n)^*.
\end{equation}
The following proposition sums up what we need from the Borel and Harish-Chandra
embeddings:
\begin{proposition}\label{prop:proj-embed-all}
    For each classical bounded symmetric domain $D$ in the above list, 
    there exists $(n,p,q)$ with $p+q=n$, and two embeddings $E$ and $F$
    \begin{equation*}
        E : D \longrightarrow \P(\Lambda^q\C^n),\, F :  
        D \longrightarrow \P(\Lambda^p\C^n),
    \end{equation*}
    such that:
    \begin{enumerate}
        \item both maps extend continuously to the boundary $\partial D$,
        \item the action of $\mathrm{Aut}(D)$ translates into the action of a 
        subgroup of $\PGL(n,\C)$ preserving the image of $D$,
        \item if $\xi \in \partial D$ then $\la F(\xi), E(\xi)\ra=0$,
        \item if $z \in D$, and $\xi\in\partial D$ then $\la F(\xi), E(z)\ra\neq 0$,
    \end{enumerate}
\end{proposition}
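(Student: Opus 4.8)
The plan is to construct $E$ and $F$ explicitly in matrix coordinates for each of the four families and to read off the four properties from the geometry of the determinant pairing. Throughout I will use that, writing $F(\xi)$ as the Plücker coordinate of a $p$-plane $W_\xi\subset\C^n$ and $E(z)$ as that of a $q$-plane $V_z\subset\C^n$, the scalar $\la F(\xi),E(z)\ra=W_\xi\wedge V_z$ vanishes exactly when $W_\xi\cap V_z\neq 0$ and is nonzero exactly when $W_\xi\oplus V_z=\C^n$. Thus property (3) is incidence of the two boundary planes and property (4) is transversality of an interior plane to a boundary plane; properties (1) and (2) will be immediate from the formulas.

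For \textbf{Type} $\I$, set $n=p+q$, write $\C^n=\C^p\oplus\C^q$ with the Hermitian form of signature $(p,q)$ and matrix $\bigl(\begin{smallmatrix} I_p & 0\\ 0 & -I_q\end{smallmatrix}\bigr)$, so that $D^\I_{p,q}$ is the set of positive $p$-planes. I take $F(Z)$ to be the $p$-plane $W_Z$ spanned by the rows of $[\,I_p\mid Z\,]$ and $E(Z)$ to be the orthogonal $q$-plane $W_Z^{\perp}$, spanned by the rows of $[\,Z^*\mid I_q\,]$; both are polynomial in the entries, giving (1), and injective. A block-determinant computation yields $\la F(\xi),E(z)\ra=\det(I_q-Z_z^{*}Z_\xi)$ up to a nonzero scalar. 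On the diagonal $\det(I_q-Z_\xi^{*}Z_\xi)$ vanishes precisely on $\partial D^\I_{p,q}$, where $I_q-Z^{*}Z$ degenerates, giving (3); and for $z\in D$, $\xi\in\partial D$ one has $\|Z_z\|<1$ and $\|Z_\xi\|\le 1$, hence $\|Z_z^{*}Z_\xi\|<1$ and $\det(I_q-Z_z^{*}Z_\xi)\neq 0$, giving (4). Property (2) is the functoriality of $\Lambda^{\bullet}$ under the linear $\PU(p,q)$-action on $\C^n$, which preserves the Grassmannian, the open orbit $E(D)$, and the pairing. \textbf{Types} $\II$ and $\III$ are handled by restricting this construction from $D^\I_{n,n}$ (so $p=q=n$) to the antisymmetric, resp. symmetric, locus; conditions (1), (2), (4) are inherited, with $\SO^{*}(2n)$, resp. $\Sp(n,\R)$, in place of $\PU(n,n)$, and condition (3) follows once one checks that $\partial D^\II_n$ and $\partial D^\III_n$ lie in the degeneracy locus of $I_n-Z^{*}Z$. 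The only point needing care is \textbf{Type} $\II$ with $n$ odd, where the Shilov model $Z_0$ satisfies $Z_0Z_0^{*}-I_n=\mathrm{diag}(0,\dots,0,-1)$: here $\|Z_0\|=1$ but $I_n-Z_0Z_0^{*}$ is still degenerate, so (3) survives and the strict bound $\|Z_z\|<1$ still gives (4).

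For \textbf{Type} $\IV$, set $N=n+2$ and embed $D^\IV_n$ in the complex quadric $Q^n\subset\P(\C^N)$ through $z\mapsto[\,1:z_1:\cdots:z_n:\tfrac12 z^{T}z\,]$, exactly as the bidisc sits on the quadric $Q\subset\CP3$; I let $F(z)$ be this point (a line, $p=1$) and $E(z)$ its polar $(N-1)$-plane for the symmetric bilinear form $\beta$ defining $Q$. Then $\la F(\xi),E(z)\ra=\beta(\hat F(\xi),\hat F(z))$ for any linear lifts, which vanishes on the diagonal because $\hat F(\xi)$ is $\beta$-isotropic, giving (3) for every boundary point. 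Property (2) holds because $\SO^0(n,2)$ acts linearly preserving $\beta$, hence $Q$ and its polarity. Property (4) is the assertion that $\beta(\hat F(\xi),\hat F(z))\neq 0$ for $\xi=e^{i\theta}x$ in the Shilov boundary ($x$ real, $x^{T}x=2$) and $z\in D$, and this is precisely where the two inequalities $z^{*}z<2$ and $z^{*}z<1+|\tfrac12 z^{T}z|^{2}$ defining $D^\IV_n$ enter, in direct analogy with the factorization $F_\xi(E(z))=(z-z_0)(w-w_0)$ of the bidisc.

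The routine parts are (1), which is the continuity of polynomial formulas, and (2), which is functoriality of exterior powers under the relevant linear action on $\C^N$. The substance is in (3) and (4). Condition (3) I reduce uniformly to degeneracy of the invariant Hermitian, resp. quadratic, form along $\partial D$. I expect the main obstacle to be condition (4): for Types $\I$--$\III$ it is the clean operator-norm estimate above, but for Type $\IV$ the non-vanishing must be teased out of a pair of inequalities rather than a single norm bound, and matching it to the value $\beta(\hat F(\xi),\hat F(z))$ is the least automatic step. My strategy there is to first normalize the pair $(z,\xi)$ using the transitivity of $\mathrm{Aut}(D)$ and the singular value decomposition, as in Section \ref{sec:SVD}, reducing (4) to a one-variable positivity statement of the same shape as in the bidisc.
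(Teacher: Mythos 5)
For Types $\I$, $\II$, $\III$ your proof is essentially the paper's: the paper defines the column-span maps $E_q$ and $F_p$ (Proposition \ref{prop:proj-embedding}), computes the pairing as $\det(I_p-Z'Z^*)$ by the block-determinant identity (Proposition \ref{prop:duality-explicit}), and leaves (3) and (4) to the degeneracy of $I-Z^*Z$ on $\partial D$ and the singular-value bound, exactly as you argue, with Types $\II$, $\III$ by restriction. Your row conventions merely compose the paper's maps with $Z\mapsto \overline{Z}$, which is harmless here (though it makes the embeddings anti-holomorphic rather than holomorphic). For Type $\IV$ you genuinely diverge: the paper takes $E(z)$ to be the $q$-isotropic line and $F(z)$ its orthogonal complement for the \emph{Hermitian} form $h$, so the pairing \eqref{eq:duality-F-IV} is sesquilinear and (3) rests on the fact that boundary lines are $h$-null; you instead take the polar hyperplane for the \emph{bilinear} form $\beta=q$. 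That choice is equally $\SO^0(n,2)$-equivariant and makes (3) automatic, since every point of the quadric is $\beta$-isotropic; the price is that your $\Lambda$ differs from the paper's by conjugations, so downstream formulas (e.g.\ Lemma \ref{lem:values}) would change accordingly. Both routes are legitimate.

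The genuine gap is condition (4) for Type $\IV$, which you yourself single out as the substance and then only sketch. Two points. First, you quietly replaced $\partial D$ by the Shilov boundary there, and this is not cosmetic but \emph{necessary}: with either duality, (4) is false for the full topological boundary of $D^{\IV}_n$. Indeed $D^{\IV}_2$ is the bidisc (Section \ref{ssec:bidisc}); the bilinear pairing is $(z-z_0)(w-w_0)$, and a boundary point $\xi=(z_0,w_0)$ with $|z_0|=1$, $|w_0|<1$ pairs to zero against every interior point with $w=w_0$ (the Hermitian pairing fails likewise, at interior points with $z=\overline{w_0}$). So the proposition as literally stated cannot be proved in Type $\IV$; the paper overstates it too, but only ever invokes the case $\xi\in\partial_S D$ in Section \ref{section:Hilbert}, which is all the Hilbert metric needs — whereas for Types $\I$--$\III$ your norm bound really does give all of $\partial D$. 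You should therefore state (4) for $\xi\in\partial_S D$ in Type $\IV$ explicitly, rather than eliding the distinction. Second, your Shilov-case sketch should simply be executed, and it is one line, not an ellipse-type positivity estimate: by transitivity send $z$ to $0$ (automorphisms preserve $\partial_S D$); for $\xi=e^{i\theta}x$ with $x$ real and $x^Tx=2$, a lift of the quadric point is $(e^{i\theta}x,\,e^{2i\theta},\,1)$, and its $\beta$-pairing against the lift $(0,\ldots,0,0,1)$ of the image of the origin equals $-e^{2i\theta}\neq 0$ — the analogue of item 4 of Lemma \ref{lem:values}. The SVD normalization of pairs and the bidisc-style positivity argument (Lemma \ref{lem:estim-Fz(Ex)}) are only needed later, to locate the maximum in the distance formula, not for the non-vanishing statement (4).
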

We still denote by $E$ and $F$ the extensions to the boundary.

The two maps $E$ and $F$ are easily described for domains of types $\I,\II$ and 
$\III$, as we do now. Given a matrix  $M\in{\rm Mat}(n,k,\C)$ we still denote by 
$M\in\Lambda^k\C^n$ the exterior product of its columns. We denote by $[M]$ 
its class in the projective space $\P(\Lambda^k\C^n)$ (whenever ${\rm rk } M =k$). 
Using this notation, write $n=p+q$ where $p$ and $q$ are as in \eqref{eq:type-I}.    
Define the following two maps. 
\begin{equation}
    \begin{matrix}
        E_q :& D^\I_{p,q} & \longrightarrow & \P(\Lambda^q\C^n)\\
        &   Z          & \longmapsto     & \begin{bmatrix} Z \\ I_q\end{bmatrix}
    \end{matrix}
    \quad \mbox{ and }\quad
    \begin{matrix}
        F_p : & D^\I_{p,q} & \longrightarrow & \P(\Lambda^p\C^n)\\
        &   Z          & \longmapsto     & \begin{bmatrix} I_p \\ Z^*\end{bmatrix}
    \end{matrix}
\end{equation}
Then the maps $E$ and $F$ of Proposition \ref{prop:proj-embed-all} are given in 
Type $\I$, $\II$, $\III$ by these maps, for suitable values of $p$ and $q$:
\begin{proposition}\label{prop:proj-embedding}
\begin{enumerate}
 \item The two maps $E_q$ and $F_p$ are embeddings, and they extend to the boundary of $D^\I_{p,q}$. 
 \item The action of ${\rm Aut}(D^\I_{p,q})$ translates into the natural action of ${\PU}(p,q)$ on  $\P(\Lambda^q\C^n)$ and  $\P(\Lambda^p\C^n)$.
 \item For domains of types $\II$ and $\III$, $1.$ and $2.$ hold when restricting $E_n$ and $F_n$ to the subdomains $D^{\II}_{n}\subset D^\I_{n,n}$ and $D^{\III}_{n}\subset D^\I_{n,n}$
\end{enumerate}
\end{proposition}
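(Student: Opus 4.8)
The plan is to recognize both maps as the standard affine charts of Grassmannians and then to transport the Hermitian form of $\C^n$ (and, for types $\II$--$\III$, an auxiliary bilinear form) through these charts. The first point to record is that $E_q$ and $F_p$ are in fact defined on all of $M_{p,q}(\C)$, not merely on $D^\I_{p,q}$: the block $I_q$ (resp. $I_p$) forces $\begin{bmatrix} Z \\ I_q\end{bmatrix}$ (resp. $\begin{bmatrix} I_p \\ Z^*\end{bmatrix}$) to have full rank $q$ (resp. $p$), so the wedge of its columns is nonzero and defines a point of $\mathrm{Gr}(q,\C^n)\subset\P(\Lambda^q\C^n)$. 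This is precisely the affine chart of the Grassmannian formed by the $q$-planes meeting the coordinate $p$-plane trivially, and $Z\mapsto\mathrm{colspan}\begin{bmatrix} Z\\ I_q\end{bmatrix}$ is a biholomorphism onto it: injectivity follows by normalizing the bottom block (two such matrices spanning the same plane differ by right multiplication by some $g\in\GL(q,\C)$, and reading off the bottom block forces $g=I_q$), while the inverse is read off from the Plücker coordinates, so the map is an immersion. Restricting to $D^\I_{p,q}$ gives an embedding, and since the map is already holomorphic on the ambient $M_{p,q}(\C)\supset\overline{D^\I_{p,q}}$, its extension to the boundary is automatic. This settles Statement $1$.

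For Statement $2$ I would fix the form $H=\mathrm{diag}(-I_p,I_q)$ of signature $(p,q)$ and compute, for $w=\begin{bmatrix} Z\\ I_q\end{bmatrix}v$, that $w^*Hw=v^*(I_q-Z^*Z)v$; thus $E_q$ identifies $D^\I_{p,q}$ with the set of $H$-positive $q$-planes. Since $\SU(p,q)=\{g:g^*Hg=H\}$ preserves $H$, it preserves this set and acts on it through its linear, hence projective, action on $\P(\Lambda^q\C^n)$. Writing $g=\begin{pmatrix} A & B \\ C & D\end{pmatrix}$ in $(p,q)$-blocks, the image $g\begin{bmatrix} Z\\ I_q\end{bmatrix}=\begin{bmatrix} AZ+B \\ CZ+D\end{bmatrix}$ renormalizes, using that $CZ+D$ is invertible on the domain, to $\begin{bmatrix} (AZ+B)(CZ+D)^{-1} \\ I_q\end{bmatrix}$, so the induced action on $Z$ is the generalized Möbius transformation $Z\mapsto(AZ+B)(CZ+D)^{-1}$, the classical action of $\mathrm{Aut}(D^\I_{p,q})$; since scalar matrices act trivially on projective space, the effective group is $\PU(p,q)$. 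The analogous statement for $F_p$ follows from the one-line check that $\begin{bmatrix} I_p \\ Z^*\end{bmatrix}$ spans the $H$-orthogonal complement of $\begin{bmatrix} Z\\ I_q\end{bmatrix}$, so $\SU(p,q)$ intertwines $F_p$ as well and acts by the contragredient representation in the dual space $\P(\Lambda^p\C^n)$.

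For Statement $3$ I would exhibit the extra invariant structure that cuts out the subdomains. Taking $p=q=n$, a direct computation shows that $\mathrm{colspan}\begin{bmatrix} Z\\ I_n\end{bmatrix}$ is isotropic for the symplectic form $J_n$ exactly when $Z^T=Z$ (yielding $\begin{bmatrix} Z^T & I_n\end{bmatrix}J_n\begin{bmatrix} Z\\ I_n\end{bmatrix}=Z^T-Z$), and isotropic for the symmetric form $\begin{pmatrix} 0 & I_n \\ I_n & 0\end{pmatrix}$ exactly when $Z^T=-Z$. Hence $E_n$ carries $D^\III_n$ to the $H$-positive Lagrangian planes and $D^\II_n$ to the $H$-positive planes isotropic for the symmetric form, and intersecting the stabilizer of $H$ with that of the auxiliary form yields a copy of $\Sp(n,\R)$ in type $\III$ and of $\SO^*(2n)$ in type $\II$, each preserving the symmetry $Z^T=\pm Z$ under the Möbius action. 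Statements $1$ and $2$ then restrict verbatim: a restriction of an embedding is an embedding, the boundary extension is inherited, and the action is the restriction of the $\PU(n,n)$-action.

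The linear algebra of the first paragraph and the Möbius computation of the second are entirely standard; the delicate point is the third paragraph, namely matching the symmetry conditions $Z^T=\pm Z$ with isotropy for an explicit bilinear form and checking that the resulting stabilizers in $\SU(n,n)$ are exactly $\Sp(n,\R)$ and $\SO^*(2n)$ in the conventions fixed by $J_n$ in \eqref{eq:type-II} and \eqref{eq:type-III}. This amounts to conjugating the standard symplectic and symmetric forms into the $(p,q)$-block normal form of $H$ by a Cayley-type change of basis and tracking which real form of the complexified group survives; I expect this compatibility of conventions, rather than any conceptual difficulty, to be the main obstacle.
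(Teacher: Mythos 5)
Your proof is correct and takes essentially the same route as the paper: the paper likewise presents $E_q$ and $F_p$ as the Harish-Chandra affine charts of the Grassmannians $G(q,n)$ and $G(p,n)$, characterizes the image of the domain by the sign of the Hermitian form (with the opposite but equivalent convention $I_{p,q}=\mathrm{diag}(I_p,-I_q)$ and negative $q$-planes), obtains the dual map from $h$-orthogonality via $W_x^+=(W_x^-)^\perp=\mathrm{Span}\begin{pmatrix}I_p\\ Z_x^*\end{pmatrix}$, and handles types $\II$ and $\III$ by restriction. Your explicit identification of those restrictions as the loci $Z^T=\pm Z$ cut out by isotropy for the auxiliary symplectic and symmetric bilinear forms only fleshes out what the paper states in one line, and the convention-matching with the matrix models of $\Sp(n,\R)$ and $\SO^*(2n)$ that you flag as delicate is not actually required by the statement, which only asks that the restricted maps be embeddings and that the automorphisms act through the restricted $\PU(n,n)$-action.
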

The following easy proposition checks the non-vanishing property (see Section 
\ref{ssec:bidisc2}), that will allow us to define the Hilbert metric later on.
\begin{proposition}\label{prop:duality-explicit}
For any $Z$ and $Z'$  in the topological closure $\overline{D_{p,q}^\I}$, we have
$$F_p(Z)\wedge E_q(Z') = \det(I_p-Z'Z^*).$$
\end{proposition}
\begin{proof}
 Using our notation,
 $$F_p(Z)\wedge E_q(Z') = \det\begin{pmatrix}I_p & Z'\\Z^* & I_q\end{pmatrix}.$$
 The result follows directly from the fact that for any invertible matrix $D$
  \begin{align}\label{eq:determinant}
\det  \begin{pmatrix} A & B \\
							C & D
\end{pmatrix} = \det (D)\det (A-BD^{-1} C).
\end{align}
\end{proof}
The following subsection reviews in depth Borel and Harish-Chandra embeddings, to prove
the two previous propositions and describe de Type $\IV$. Readers convinced by the previous
statements and less interested in Type $\IV$ may prefer to skip it and directly jump
to Section \ref{sec:SVD}.
 
\subsection{Borel and Harish-Chandra embeddings}\label{sec:Borel}
The link between the classical Hermitian symmetric spaces and symmetric 
bounded domains may be accomplished considering an action of the group on a 
Grassmannian, and then using an affine chart on the Grassmannian.
These two steps are respectively the Borel embedding and the 
Harish-Chandra embedding. We present explicitly these embeddings for 
the classical case since we use them afterwards. We treat types $\I$, 
$\II$ and $\III$ together and then present Type $\IV$.

\subsubsection{Types $\I$, $\II$, $\III$}
Type $\I$ Hermitian symmetric spaces are the quotients 
$X^\I_{p,q} = SU(p,q)/S(U(p)\times U(q))$. Since we consider $p$ 
and $q$ fixed in this section, we will drop the dependence in $p$ 
and $q$ and simply denote the space by $X^\I$.
Let us denote by $G(k,n)$ the Grassmannian of $k$-spaces in $\C^n$, 
where $n=p+q$.
The Borel embeddings of $X^\I$ realize it as subspaces of the 
Grassmannians $G(p,n)$ or $G(q,n)$. 
To describe these embeddings, let us endow $\C^n$ with a Hermitian 
form  $h$ of signature $(p,q)$ given in the canonical basis 
$(e_i)_{1\leqslant i\leqslant n}$ of $\C^{p+q}$ by 
\[
    h(z,z)=\sum_{i=1}^p |z_i|^2 -\sum_{j=1}^q|z_{p+i}|^2.
\] 
We denote by $I_{p,q}$ the matrix of $h$ in the canonical basis of 
$\C^n$, and we decompose $\C^n$ as the orthogonal direct sum 
$\C^n = W^+\oplus W^-$ (where $W^+=\la e_1,\cdots,e_p\ra$ and 
$W^- = \la e_{p+1},\cdots,e_{p+q}\ra$). The Borel embeddings of 
$X^\I$ are the following two (well-defined) maps
\begin{eqnarray}\label{eq:embedp}
X^\I & \longrightarrow & G(p,n) \nonumber\\
         x = \lbrack g \rbrack     & \longmapsto & g\cdot W^+ = W^+_x
\end{eqnarray}
and
\begin{eqnarray}\label{eq:embedq}
 X^\I & \longrightarrow & G(q,n) \nonumber\\
       x =  \lbrack g \rbrack    &\longmapsto & g\cdot W^- = W^-_x,
\end{eqnarray}
where $[g]$ denotes the class of $g$ in the quotient 
$\SU(p,q)/{\rm S}({\rm U}(p)\times {\rm U}(q))$. Note that there is a 
natural duality between $G(p,n)$ and $G(q,n)$, given by orthogonality for $h$. 
In particular $W^+_x$ and $W^-_x$ are orthogonal. The actions of the group 
$SU(p,q)$ on the Grassmannians decomposes  $G(p,n)$ and $G(q,n)$ into orbits. 
The images of $X^\I$ by the maps \eqref{eq:embedp} and \eqref{eq:embedq} 
are respectively $G^+(p,n)$ and $G^-(q,n)$, the sets of positive $p$-subspaces 
(resp. negative $q$-subspaces), namely:
\[
    G^+(p,n) = \lbrace W\in G(p,n), h_{|W}>0 \rbrace \subset G(p,n)
\]
and
\[ 
    G^-(q,n) = \lbrace W\in G(q,n), h_{|W}<0  \rbrace \subset G(q,n).
\]

The Harish-Chandra realizations of $X^\I$ as bounded domains follow 
from choosing affine charts on the two Grassmannians, which contain  
$G^+(p,n)$ and $G ^-(q,n)$ respectively.  Let us explain how this work 
for $G^-(q,n)$ (the situation is symmetric for $G^+(p,n)$). Observe first 
that any subspace $W_x\in G^-(q,n)$ is transverse to $W^+$ (meaning 
$ W_x\cap W^+ = \{0\}$), and therefore is the graph of a unique linear 
map $L_x : W^-\longrightarrow W^+$. Using the bases of $W^-$ and $W^+$ 
coming from the canonical basis, we identify $L_x$ to a matrix 
$Z_x\in{\rm Mat}(p,q,\C)$.  For any such matrix $Z\in{\rm Mat}(p,q,\C)$, 
the open condition expressing that $Z$ corresponds to a negative type 
$q$-subspace is given by imposing that the graph of the linear map 
given by $Z$ has negative type, that is
$$ \forall w \in W^-,\, h(w+Zw,w+Zw)<0.$$ 
This is equivalent to 
\[
\begin{pmatrix} Z^*, I_q\end{pmatrix} I_{p,q}  \begin{pmatrix}Z\\I_q\end{pmatrix}  = Z^*Z-I_q <0.
\]
The map
\[
 x \longmapsto Z_x
\]
gives the first Harish-Chandra realization of $X^\I$  as the domain 
$D^\I_{p,q}$ described in \eqref{eq:type-I}.  Using the orthogonality 
relation between $G^-(q,n)$ and $G^+(p,n)$, we obtain that the other 
Harish-Chandra realization of $X^\I$ is given by
\begin{equation}\label{eq:HC2}
 x \longmapsto Z_x^*
\end{equation}
The image of $X^\I$ by the map \eqref{eq:HC2} is the domain 
$\{ S\in {\rm Mat}(q,p,\C), S S^*-I_p <0\}$, which is 
biholomorphic to $D^\I_{p,q}$. 

To sum up, as displayed in Figure \ref{fig:diagram}, the first Borel and Harish-Chandra embeddings associate 
to a point $x\in X^\I_{p,q}$ a $q$-subspace $W_x^-$ and a matrix $Z_x$. 
These two are related by
$$W_x^-={\rm Span}\begin{pmatrix}Z_x\\I_q\end{pmatrix},$$
where we mean by ${\rm Span}(M)$ the subspace generated by the columns 
of the matrix $M$.
The other embedding associate to $x$ the matrix $Z_x^*$ and the $p$-subspace 
$$W_x^+=(W_x^-)^\perp ={\rm Span}\begin{pmatrix}I_p\\Z_x^*\end{pmatrix},$$ 
\begin{figure}[ht]\label{fig:diagram}
    \begin{center}
        \begin{tikzcd}[]
            & & x\in X^\I_{p,q} 
                \arrow[dddl, "B" description, leftrightarrow] 
                \arrow[ddr, "B" description, leftrightarrow] & &\\
            &&&&\\
            & & & W_x^+ \arrow[dll, "\perp" description, leftrightarrow, blue] 
                        \arrow[ddl, "HC" description, leftrightarrow] 
                        \arrow[ddr, "P" description] &\\
            & W_x^-  &&&\\
            &&Z_x^* \arrow[dll, "*" description, leftrightarrow, blue] & & E_p(x) \arrow[dll, "\perp" description, leftrightarrow, blue]\\
            Z_x \arrow[from=uur,"HC" description, leftrightarrow]
                \arrow[rr, "E" description, red]
                \arrow[urrrr, "F" description, red]
                    & & E_q(x) \arrow[from=uul,"P" description,crossing over] & &\\
        \end{tikzcd}
        \caption{A synthetic view of the Borel and Harish-Chandra embeddings and the maps $E$ and $F$ (in red) for Type $\I$.\\
        Here, we have $W_x^+ \in G(p,n)$, $W_x^- \in G(q,n)$, $Z_x \in D^\I_{p,q}\subset M_{p,q}$ and $Z_x^* \in M_{q,p}$, $F(x) = E_q(x)\in \P(\Lambda^q\C^n)$ and $E(x) = E_p(x)\in \P(\Lambda^p\C^n)$.
        The black arrows are the different steps of Borel, Harish-Chandra and Plücker embeddings, denoted by respectively the letters $B$, $HC$ and $P$.\\
        The three blue arrows are given by orthogonality: taking the adjoint on the bottom left, orthogonality for the Hermitian structure in the middle, and orthogonality in duality in the bottom right.
        }
    \end{center}
\end{figure}

The embeddings for the domains of type $\II$ and $\III$ are obtained by 
restrictions of the embeddings for domains of type one to the appropriate sub-spaces.

\subsubsection{Type $\IV$\label{section:typeIV}}
Following the description we have sketched for types $\I$, $\II$ and $\III$, 
the type $\IV$ symmetric space 
$\SO_0(n,2)/(\SO(n)\times \SO(2))=\SO(n,2)/{\rm S}({\rm O}(n)\times {\rm O}(2))$ 
can be realized in two dual ways as open subsets of real Grassmannians on  
$\R^{n+2}$ equipped with a $(2,n)$ quadratic form: either as $G_\R^-(2,n+2)$ 
or as $G_\R^+(n,n+2)$. The latter are respectively the open subsets of the 
real Grassmannian of 2-planes (resp. $n$-subspaces) in $\R^{n+2}$ that are 
negative (resp. positive). However, to realize this family of symmetric spaces 
as complex bounded symmetric domains, it is customary to take a slightly 
different point of view which we describe here.

Let $q$ be the quadratic form of signature $(n,2)$ on $\R^{n+2}$ given in 
the canonical basis by
$$q(x)=x_1^2+\cdots x_n^2-x_{n+1}^2-x_{n+2}^2.$$
The form $q$ extends naturally to a complex bilinear form $q$ on the 
complexification $\C^{n+2}$, which we still denote by $q$. It also gives 
rise to a Hermitian form $h$ of signature $(n,2)$,  obtained by setting
$$h(z,w)=q(z,\overline{w}),\, \forall z,w\in\C^n.$$
The group $SO_0(n,2)$ preserves both $q$ and $h$. 

Let us denote by $P_0$ the 2-plane in $\R^{n+2}$ given by 
$P_0=\la e_{n+1}, e_{n+2}\ra$, by $V_0=\la e_1\ldots e_n\ra$ its orthogonal 
complement in $\R^{n+2}$ and chose an orientation of $P_0$.  The transitive 
action of the group $SO_0(n,2)$ on the set of negative planes of $\R^{n+2}$  
defines unambiguously an orientation on each negative plane of $\R^{n+2}$. 
 
The plane $P_0$ has a unique complex structure $J$ preserving $h$ and such 
that for all vectors $x\in P_0\setminus\{0\}$, the pair $(Jx,x)$ is 
positively oriented. Choosing the orientation so that $(e_{n+1},e_{n+2})$ 
is positive, this complex structure is given by
    $$J(e_{n+1})=-e_{n+2},\mbox{ and }J(e_{n+2})=e_{n+1}.$$
The action of $J$ extends linearly to the complexification 
$P_0^\C\subset \C^{n+2}$. Let $L_0$ be the $i$-eigenline of 
$J$ in $P_0^\C$, which is spanned by $e_{n+1}+ie_{n+2}$.

The line $L_0$ is negative for $h$ and isotropic for $q$. The previous 
construction extends to all negative  2-planes in $\R^{n+2}$ by the 
action of $SO_0(n,2)$. From this discussion, it follows that the set of 
negative planes in $\R^{n+2}$ is in bijection with the set of complex 
lines in $\C^{n+2}$ that are negative for $h$ and $q$-isotropic (this 
is Proposition 6.1 in Appendix 6 of \cite{Satake}). The converse bijection 
is obtained by noting that given a complex line $L$ in  $\C^{n+2}$, which 
is $q$-isotropic and $h$-negative, then the complex plane 
$W=L\oplus \overline{L}$ is the complexification of a negative plane in $\R^{n+2}$.

Now, to obtain an explicit parametrization, we first do a linear change of  
coordinates by transforming  $e_{n+1}$ and $e_{n+2}$ respectively into 
$\frac{1}{\sqrt{2}}(e_{n+1}-ie_{n+2})$ and $\frac{1}{\sqrt{2}}(e_{n+1}+ie_{n+2})$. 
In these coordinates, we observe  that a complex line $L=\C z$ is 
$q$-isotropic and $h$-negative  if and only if $z$ satisfies 
\begin{eqnarray*}
\sum_{k=1}^nz_k^2 & = & 2z_{n+1}z_{n+2}\\
\sum_{k=1}^n |z_k|^2 & < & |z_{n+1}|^2+|z_{n+2}|^2.
\end{eqnarray*}
These two conditions define an open subset of a quadric in $\C P^{n+1}$, 
whose intersection with the affine chart $\{z_{n+2}=1\}$ is parametrized by
    $$\mathcal{O} = 
    \left\{ \bigl[z_1:\ldots :z_n:\dfrac{1}{2}\sum_{k=1}^nz_k^2:1\bigr],\, 
    \sum_{k=1}^n |z_k|^2 < 1 +
    \Bigl\vert\dfrac{1}{2}\sum_{k=1}^nz_k^2 \Bigr\vert^2\right\}.$$
Note that all points in $\mathcal{O}$ satisfy $|\sum_{k=1}^nz_k^2|\neq 2$ 
for otherwise we would have simultaneously $\sum_{k=1}^n|z_k|^2<2$ and 
$\left|\sum_{k=1}^nz_k^2\right|=2$.   
The complex line $L_0$ we started from corresponds to the point with 
homogeneous coordinates $[0:\ldots : 0 : 0 :1]$, and by connectedness 
of $SO_0(n,1)$, its orbit is contained in the subset 
  \[
    \mathcal{O}_0 = \left\{ 
        \bigl[z_1:\ldots :z_n:\dfrac{1}{2}\sum_{k=1}^nz_k^2:1\bigr],\, 
        |z^Tz|<2,\,z^*z<1+\Bigl\lvert\frac12z^Tz\Bigr\rvert^2
        \right\}
    \]
The orbit of $L_0$ is in fact {\it equal to} $\mathcal{O}_0$:  we refer 
to pages 76 and 77 of \cite{mok-MetricRigidity} for details.

As in the case of domains of types $\I$, $\II$ and $\III$, one can 
describe the dual realization and see $SO_0(n,2)/SO(n)\times SO(2)$ as 
a subset of the Grassmannian of hyperplanes in $\C^{n+2}$. This is done 
via the duality associated to the Hermitian form $h$. As an example, 
the hyperplane associated to the complex line $L_0$ is $H_0=L_0^{\perp_h}$ 
which is nothing but the hyperplane spanned by $(e_1,\ldots,e_n, e_{n+2}+ie_{n+1})$, 
that is the hyperplane spanned by $V_0$ and the $-i$ eigenline of $J$ 
restricted to $P_0^\C$. Again, this extends to all $q$-isotropic 
$h$-negative complex lines by the action of $SO_0(n,2)$.

To sum-up the previous discussion, the two dual Borel realizations of 
the symmetric space $SO_0(n,2)/SO(n)\times SO(2)$ as subsets of complex 
Grassmannians are respectively:
    \[
        {\rm Gr}(1,n+2)_0^-=\{ L\in G(1,n+2),\,  L \mbox{ is $q$-isotropic and 
    $h$-negative}.\}
    \]
and 
    \[
    {\rm Gr}(n+1,n+2)_0^+=\{H\in G(n+1,n+2), H^{\perp_h} 
    \mbox{ is $q$-isotropic and 
    $h$-negative}\}. 
    \]
To prove Proposition \ref{prop:proj-embed-all} in this case, it remains 
to describe the maps $E$ and $F$ in terms of the previous discussion. The 
map $E$ is defined as follows. 
\begin{eqnarray*}
     E :  D_n^{\IV} &\longrightarrow &{\rm Gr}(1,n+2)_0^- \\
     z=(z_1,\ldots, z_n) &\longmapsto& \C\cdot w,   
\end{eqnarray*}
where $w$ is the vector defined by 
\begin{equation}\label{eq:def-E-IV}
    w=\begin{bmatrix}z_1 \\ \vdots \\ z_n \\ z_{n+1}\\1 \end{bmatrix}
    \mbox{ with } z_{n+1}=\dfrac{1}{2}\sum_{i=1}^nz_i^2.
\end{equation}
Using the same notation as above, the map $F$ is given by 
\begin{eqnarray}\label{eq:def-F-IV}
 F :  D_n^{\IV} &\longrightarrow &{\rm Gr}(n+1,n+2)_0^- \nonumber \\
     z=(z_1,\ldots, z_n) &\longmapsto& 
     {\rm Span}
     \begin{pmatrix} 
     &  I_{n}  & & 0\\
     \overline{z_1} & \ldots & \overline{z_n} & -\dfrac{1}{2}\sum_{k=1}^n \overline z_k^2\\
     0			  & \ldots  &	0		&  1
     \end{pmatrix},   
\end{eqnarray}
which we verify by checking that each column of the matrix is orthogonal 
to $w$ with respect to the Hermitian product.

The analog of proposition \ref{prop:duality-explicit} is the following statement.
For any $z=(z_1,\ldots,z_n)$ and $z'=(z'_1,\ldots,z'_n)$ in the topological closure $\overline{D^{\IV}}$, we define $z_{n+1}$ and $z'_{n+1}$ as in \eqref{eq:def-E-IV}. We then have:
$$F_z(E(z'))=F(z)\wedge E(z') =
\det 
\begin{pmatrix}
 &  I_{n}  & & 0 & z'_1 \\
 &    & & \vdots & \vdots\\
     \overline{z_1} & \ldots & \overline{z_n} & -\overline  z_{n+1}& z'_{n+1}   \\
     0			  & \ldots  &	0		&  1 & 1
 \end{pmatrix}, 
$$
$$
=-\det 
\begin{pmatrix}
 &  I_{n}  & & 0 & z'_1 \\
  &    & & \vdots & \vdots\\
  	0			  & \ldots  &			&  1 & 1\\
      \overline{z_1} & \ldots & \overline{z_n} & -\overline  z_{n+1}& z'_{n+1}
\end{pmatrix}, 
$$
which, using the formula of the determinant of a block matrix gives
\begin{eqnarray}\label{eq:duality-F-IV}
F_z(E(z'))=-({\overline{z}_{n+1}}+z'_{n+1}) +\sum_{k=1}^{n} \bar z_k z'_k.
\end{eqnarray}

\section{Singular value decomposition and the geometry of classical 
Hermitian symmetric spaces}\label{sec:SVD}

In the next section we will describe the Hilbert metric on classical 
bounded symmetric domains. To give an explicit expression of the 
distance between two points, we will need a normalisation of pairs of 
points. It is given by the  singular value decomposition of complex 
matrices. The {\it singular values} of a complex matrix 
$Z \in M_{p,q}(\C)$, where $p\leqslant q$ are the square roots of the 
eigenvalues of  $ZZ^*$. Denote these by $\sigma_1(Z)\geqslant \cdots\geqslant \sigma_p(Z)\geqslant 0$.

For any tuple $\underline s=(s_1,\cdots,s_r)$ of real numbers (where $r\leqslant p$,), we denote by $\Sigma_{p,q}(\underline s)$ the matrix
\[ \Sigma_{p,q}(s_1,\ldots, s_r) := \begin{pmatrix} s_1 & & &0&\ldots&0\\
	&\ddots & &\vdots& &\vdots\\
	&&s_r&0&\ldots &0\\
	0&\ldots&0&0&\ldots & 0\\
	\vdots& &\vdots&\vdots&&\vdots\\
	0&\ldots&0&0&\ldots&0\end{pmatrix}\in M_{p,q}(\C).
\]
The following theorem is well-known:
\begin{theorem*}[Singular values decomposition]
Let $Z\in M_{p,q}(\C)$ be a matrix, where $p\leqslant q$. Denote by $\Sigma_Z$ the matrix $\Sigma_{p,q}(\sigma_1(Z),\cdots, \sigma_p(Z))$. Then, there exist a pair of matrices 
$(U,V)\in U(p)\times U(q)$  such that
\begin{equation}\label{eq:svd}
 Z = U\Sigma_Z V^*.
\end{equation}
\end{theorem*}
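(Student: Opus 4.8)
The plan is to reduce the statement to the spectral theorem for the Hermitian positive semidefinite matrix $ZZ^*$. Since $v^*ZZ^*v=\|Z^*v\|^2\geq 0$ for all $v$, the $p\times p$ matrix $ZZ^*$ is Hermitian and positive semidefinite; its eigenvalues are therefore nonnegative, and by definition their square roots, arranged in decreasing order, are exactly the singular values $\sigma_1(Z)\geq\cdots\geq\sigma_p(Z)\geq 0$.

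First I would apply the spectral theorem to obtain a unitary $U\in U(p)$ diagonalizing $ZZ^*$, that is $U^*ZZ^*U=D$, where $D$ is the diagonal matrix with entries $\sigma_1(Z)^2,\ldots,\sigma_p(Z)^2$. Setting $W:=U^*Z\in M_{p,q}(\C)$, the identity $WW^*=U^*ZZ^*U=D$ says precisely that the rows $r_1,\ldots,r_p$ of $W$ are pairwise orthogonal with $\|r_i\|^2=\sigma_i(Z)^2$.

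Next I would produce the second unitary. For each index $i$ with $\sigma_i(Z)>0$, set $v_i:=r_i^*/\sigma_i(Z)\in\C^q$; these vectors are orthonormal by the previous step. I would then complete this family to an orthonormal basis $(v_1,\ldots,v_q)$ of $\C^q$ by Gram--Schmidt, and let $V\in U(q)$ be the matrix whose columns are the $v_j$. A direct computation gives, for the $(i,j)$ entry of $WV$, the value $r_iv_j=\sigma_i(Z)\,v_i^*v_j=\sigma_i(Z)\,\delta_{ij}$ when $\sigma_i(Z)>0$ and $0$ when $\sigma_i(Z)=0$; hence $WV=\Sigma_Z$, which is to say $U^*ZV=\Sigma_Z$ and therefore $Z=U\Sigma_ZV^*$, as desired.

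The only delicate point is the rank-deficient case, where some singular values vanish: then the orthonormal family $(v_i)$ has strictly fewer than $q$ members and must be completed to a full orthonormal basis of $\C^q$ before assembling $V$. This completion (together with the verification that the extra basis vectors pair to zero against the vanishing rows of $W$) is the one place requiring a little care; the remaining steps are routine linear algebra, and the decreasing ordering of the $\sigma_i(Z)$ is automatic from the freedom to order eigenvalues in the spectral theorem.
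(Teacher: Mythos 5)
The paper offers no proof of this statement: it records the singular value decomposition as a well-known classical fact and uses it as a black box in the normalization lemmas of Section 3 (Lemmas \ref{lem:pairs_in_D-I}--\ref{lem:pairs_in_D-IV}), so there is no argument of the authors' to compare yours against. Your proof is the standard one, via the spectral theorem applied to the positive semidefinite matrix $ZZ^*$, and it is correct and complete with the paper's definition of singular values. The key computations check out: with $U^*ZZ^*U=D$ and $W:=U^*Z$, the identity $WW^*=D$ says the rows $r_1,\ldots,r_p$ of $W$ are pairwise orthogonal with $\|r_i\|=\sigma_i(Z)$; normalizing the rows with $\sigma_i(Z)>0$, completing to an orthonormal basis of $\C^q$, and assembling $V$ from these columns gives $(WV)_{ij}=r_iv_j=\sigma_i(Z)\,\delta_{ij}$, i.e.\ $U^*ZV=\Sigma_Z$ and hence $Z=U\Sigma_ZV^*$. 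You also correctly isolate the only delicate point, the rank-deficient case: since $\|r_i\|=\sigma_i(Z)$, a vanishing singular value forces the whole row to vanish, so such rows pair to zero against every column of $V$, while orthonormality of the completed family kills the cross terms $r_iv_j$ for $\sigma_i(Z)>0$ and $j$ an added index; and the decreasing ordering is, as you say, just a choice of ordering of the eigenvalues of $ZZ^*$ in the spectral theorem.
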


The action of the group $SU(p,q)$ on the bounded symmetric domain 
$D^\I_{p,q}$ is given by
\[
    \begin{pmatrix} A & B \\ C & D\end{pmatrix}
    \cdot Z = (AZ +B)(CZ+D)^{-1}.
\]
Note that the condition that $Z\in D^\I_{p,q}$ implies that the matrix 
$(CZ+D)$ is invertible (see Section (2.2) of Chapter 4 in 
\cite{mok-MetricRigidity} for details). An important point is that this action 
is transitive. 

Indeed, given $Z\in D^\I_{p,q}$, we can write its singular 
value decomposition as $Z= U\Sigma_ZV^*$. By a direct computation, we see 
that $I_q-Z^*Z$ is conjugate by $V^*$ to  $I_q-\Sigma_Z^*\Sigma_Z = I_q-\Sigma_Z^2$. 
Therefore, the condition that $Z\in D^I_{p,q}$ implies that the singular 
values of $Z$ satisfy $0\leqslant \sigma_i(Z)<1$. 
Denoting $\tau_i=\sqrt{1-\sigma_i^2}$, we can thus consider the two 
square matrices of respective sizes $p$ and $q$ given by
\[
	T(Z) = U \begin{psmallmatrix} 1/\tau_1 &&\\&\ddots &\\&&1/\tau_p\end{psmallmatrix}U^*\quad \textrm{ and } T'(Z) = V \begin{psmallmatrix} 1/\tau_1 &&&&&\\&\ddots &&&&\\&&1/\tau_p&&&\\ &&&1&&\\ &&&&\ddots&\\ &&&&&1\end{psmallmatrix}V^*.
\] 
Then, the block-matrix
\[M = \left(\begin{array}{@{} c|c @{}}
	T(Z) & -T(Z)Z \\
	\hline
	-T'(Z)Z^* & T'(Z) 
\end{array}\right)\] 
belongs to $\SU(p,q)$ and satisfies $M\cdot Z =0$; see e.g. 
\cite[Chap. 4, Section 2.2]{mok-MetricRigidity}.

We now want to describe the action on pair of points, analogously to 
the case of the bidisc, see Section \ref{ssec:bidisc3}. The action is 
not anymore transitive, as it preserves any invariant metric. However, 
one can send any pair to a pair $(0,S)$ where $S$ is a matrix of the form 
$\Sigma_{p,q}(s_1,\ldots s_p)$ with $s_1\geq \cdots \geq s_p$, as stated 
in the following basic lemma:
\begin{lemma}\label{lem:pairs_in_D-I}
Let $Z$ and $Z'$ be two points in $D^\I_{p,q}$. There exists a 
unique ordered $p$-uple $1>\sigma_1\geq \cdots \geq \sigma_p\geq 0$ 
and an element $M\in\SU(p,q)$ such that: 
\[
    M\cdot Z = 0 \quad \textrm{ and } \quad M\cdot Z' = 
    \Sigma_{p,q}(\sigma_1,\ldots,\sigma_p).
\]
\end{lemma}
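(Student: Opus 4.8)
The plan is to reduce to the case $Z=0$ using the transitivity of the $\SU(p,q)$-action established just above, and then to diagonalize the image of the second point by the stabilizer of $0$, which I will identify with the block-diagonal copy of $\mathrm{S}(\mathrm{U}(p)\times\mathrm{U}(q))$. First I would pick, as provided in the discussion above, an element $M_0\in\SU(p,q)$ with $M_0\cdot Z=0$, and set $W:=M_0\cdot Z'\in D^\I_{p,q}$. It then suffices to find an element of the stabilizer of $0$ carrying $W$ to a matrix of the form $\Sigma_{p,q}(\sigma_1,\ldots,\sigma_p)$, since such an element fixes $0$ and hence keeps the first point at the origin.

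Next I would compute that stabilizer. Writing $\begin{psmallmatrix} A & B \\ C & D\end{psmallmatrix}\cdot 0 = BD^{-1}$, the condition $BD^{-1}=0$ forces $B=0$; imposing the defining relation $M^*I_{p,q}M=I_{p,q}$ then yields $C=0$ together with $A\in\mathrm{U}(p)$ and $D\in\mathrm{U}(q)$, so the stabilizer of $0$ is the block-diagonal copy of $\mathrm{S}(\mathrm{U}(p)\times\mathrm{U}(q))$, acting by $W\mapsto AWD^{-1}=AWD^*$. Applying the singular value decomposition $W=U\Sigma_W V^*$ from the theorem above, the choice $A=U^*$, $D=V^*$ gives $AWD^*=\Sigma_W=\Sigma_{p,q}(\sigma_1(W),\ldots,\sigma_p(W))$, and since $W\in D^\I_{p,q}$ its singular values satisfy $1>\sigma_1(W)\geq\cdots\geq\sigma_p(W)\geq 0$, exactly as recalled above. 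The point requiring care here is the determinant-one constraint: the pair $(U^*,V^*)$ need not lie in $\mathrm{S}(\mathrm{U}(p)\times\mathrm{U}(q))$. I would fix this by rescaling both blocks by a common unit scalar $\lambda$: the action $W\mapsto \lambda AW(\lambda D)^{-1}=AWD^*$ is unchanged, while the determinant is multiplied by $\lambda^{p+q}=\lambda^n$, so choosing $\lambda$ an $n$-th root of $(\det U^*\det V^*)^{-1}$ (a number of modulus $1$) lands the rescaled block-diagonal matrix in $\SU(p,q)$. Composing it with $M_0$ yields the desired $M$, with $\sigma_i=\sigma_i(W)$.

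Finally, for uniqueness of the tuple I would show it is independent of the choice of $M_0$. If $M_0$ and $M_0'$ both send $Z$ to $0$, then $M_0'M_0^{-1}$ fixes $0$ and hence lies in $\mathrm{S}(\mathrm{U}(p)\times\mathrm{U}(q))$ by the computation above; consequently $W'=M_0'\cdot Z'=AWD^*$ for unitaries $A,D$, so $W$ and $W'$ have the same singular values. As the diagonal entries of $\Sigma_{p,q}(\sigma_1,\ldots,\sigma_p)$ arranged in nonincreasing order are precisely the ordered singular values of the target point, the $p$-tuple $(\sigma_1,\ldots,\sigma_p)$ is uniquely determined, which completes the argument. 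The two genuinely substantive points are thus the identification of the stabilizer together with its action $W\mapsto AWD^*$, and the observation that two transitivity elements sending $Z$ to $0$ differ by this stabilizer; everything else reduces to the singular value decomposition and the scalar normalization for the determinant.
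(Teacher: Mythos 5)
Your proof is correct and follows essentially the same route as the paper's: send $Z$ to $0$ by transitivity, identify the stabilizer of $0$ as $\mathrm{S}(\mathrm{U}(p)\times\mathrm{U}(q))$ acting by $W\mapsto AWD^*$, and conclude with the singular value decomposition. The paper states this in three lines and leaves implicit the details you spell out (the stabilizer computation, the determinant-one rescaling by $\lambda$, and the uniqueness of the tuple via singular-value invariance), so your write-up is just a more complete version of the same argument.
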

\begin{proof}
We have seen above that there exist $M_0$ sending $Z$ to $0$. 
Now, the stabilizer of $0$ is the product 
$\mathrm{S}(\mathrm{U}(p)\times \mathrm{U}(q))$ and a pair $U,V$ 
in this stabilizer acts on $W'$ by $(U,V)\cdot W' = UW'V^*$. 
The lemma then follows directly from the singular value decomposition.
\end{proof}

The procedure carries on to other types, giving the descriptions in 
the next three lemmas. We begin with Type $\II$, for which the description
depends on the parity of $n$:
\begin{lemma}\label{lem:pairs_in_D-II}
	Let $W$ and $W'$ be two points in $D^\II_{n}$. Then there exists 
    a unique ordered $p$-uple  $1>\sigma_1\geq \cdots \geq \sigma_p\geq 0$ 
    (where $p=\lfloor n/2 \rfloor$),  and an element $M\in\SO^*(n)$ 
    such that: 
	\[
        M\cdot W = 0 \quad \textrm{ and } \quad M\cdot W' =
            \left\{ \begin{array}{l} 
                \begin{bmatrix} w(\sigma_1) & & \\
                    &\ddots & \\
                     & & w(\sigma_p)
                \end{bmatrix}
                \mbox{ if $n$ is even }\\
                \\ 
                \begin{bmatrix} 
                    w(\sigma_1) & & & \\
                    & \ddots &  &\\ 
                    & & w(\sigma_p) & \\ 
                    & & & 0
                \end{bmatrix}
                \mbox{ if $n$ is odd }
            \end{array}\right.,\]
	where 
        \[
            w(\sigma) = \begin{bmatrix}
                            0 & \sigma \\
                            -\sigma & 0
                        \end{bmatrix}.\] 
	The singular values of $W'$ are 
    $1>\sigma_1 = \sigma_1\geq \sigma_2 = 
        \sigma_2 \geq \cdots \geq \sigma_p = \sigma_p$, 
        with an additional $0$ when $n$ is odd.	
\end{lemma}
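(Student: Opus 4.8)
The plan is to follow the same three-step scheme used in the proof of Lemma~\ref{lem:pairs_in_D-I}, replacing the singular value decomposition by its antisymmetric analogue. First, since $D^\II_n$ is the homogeneous space $\SO^*(2n)/\mathrm{U}(n)$, there is an element $M_0\in\SO^*(2n)$ with $M_0\cdot W = 0$ (concretely one may take the element constructed for Type~$\I$ in terms of $T(W)$ and $T'(W)$ and observe that the antisymmetry $W^T=-W$ forces it to lie in $\SO^*(2n)$). Writing $W'' = M_0\cdot W'$, which is again an antisymmetric matrix lying in $D^\II_n$, the problem reduces to normalizing the single point $W''$ under the isotropy subgroup $\mathrm{Stab}_{\SO^*(2n)}(0)=\mathrm{U}(n)$.

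The second step is to identify this isotropy action explicitly. In Type~$\I$ the stabilizer of $0$ acts by $(U,V)\cdot Z = UZV^*$; I would show that the elements of that group which also lie in $\SO^*(2n)$ are precisely those for which the defining bilinear relation of $\SO^*(2n)$ forces $V=\overline U$, so that the induced action on antisymmetric matrices is the unitary congruence $Z\mapsto UZU^T$ with $U\in\mathrm{U}(n)$. This action preserves antisymmetry, since $(UZU^T)^T = UZ^TU^T=-UZU^T$, and it preserves the domain: using the identity $U^T\overline U = I$ one computes $(UZU^T)(UZU^T)^* = UZZ^*U^*$, so the singular values of $Z$ are unchanged and the constraint $\sigma_i<1$ persists.

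The third and central step is an antisymmetric analogue of the singular value decomposition (the Youla normal form): every complex antisymmetric matrix $A$ can be written $A = U\,\Sigma\,U^T$ with $U\in\mathrm{U}(n)$ and $\Sigma$ the block-diagonal matrix with diagonal blocks $w(\sigma_1),\ldots,w(\sigma_p)$, together with a trailing $0$ when $n$ is odd, where $\sigma_1\geq\cdots\geq\sigma_p\geq 0$. Applying this to $W''$ and then acting by $U^*\in\mathrm{U}(n)$ gives $(U^*)W''(U^*)^T=\Sigma$, which is exactly the claimed normal form. Uniqueness of the ordered tuple follows because the $\sigma_i$ are the distinct positive singular values of $W''$, each arising as a double singular value of $\Sigma$ (this is the repetition $\sigma_i=\sigma_i$ recorded in the statement); hence they are invariants of the pair $(W,W')$, determined once we fix the decreasing order. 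The main obstacle is precisely establishing this Youla decomposition with the \emph{transpose}-congruence action, and confirming that the isotropy restricts to the pairs $(U,\overline U)$; both facts can be extracted from the ordinary singular value decomposition by organizing the eigenvectors of $AA^*$, whose nonzero eigenvalues occur in equal pairs, into a frame adapted to the antisymmetric pairing.
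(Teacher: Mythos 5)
Your proof is correct and follows essentially the same route as the paper: the paper offers no separate argument for Type $\II$, saying only that the Type $\I$ procedure ``carries on'', and your proposal fills in exactly the intended steps --- an element of $\SO^*(2n)$ moving $W$ to $0$, identification of the isotropy group of $0$ as $\mathrm{U}(n)$ acting by the congruence $Z\mapsto UZU^T$, and the Youla/Hua antisymmetric canonical form playing the role that the singular value decomposition plays in Lemma \ref{lem:pairs_in_D-I}, with uniqueness of the tuple following because any two normalizing elements differ by an isotropy element, which preserves singular values. One small wording fix: rather than ``the $\sigma_i$ are the distinct positive singular values of $W''$'' (the $\sigma_i$ need not be pairwise distinct), say that the singular values of $W''$ occur in equal pairs and the $\sigma_i$ are these values each taken with half its multiplicity, padded by zeros.
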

The case of Type $\III$ is easier to describe:
\begin{lemma}\label{lem:pairs_in_D-III}
	Let $W$ and $W'$ be two points in $D^\III_{n}$. Then there exists 
    a unique ordered $n$-uple  $1>\sigma_1\geq \cdots \geq \sigma_n\geq 0$ 
    and an element $M\in\Sp(n)$ such that: 
	\[
        M\cdot W = 0 \quad \textrm{ and } \quad M\cdot W' = 
        \Sigma_{n,n}(\sigma_1,\ldots,\sigma_n).
    \]
\end{lemma}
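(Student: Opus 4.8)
The plan is to mirror the proof of Lemma \ref{lem:pairs_in_D-I}, replacing the ordinary singular value decomposition by the Autonne--Takagi factorization, which is the correct normal form for the \emph{symmetric} matrices constituting $D^{\III}_{n}$.

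First I would send $W$ to the origin. Since $D^{\III}_{n}$ is the symmetric space $\Sp(n,\R)/\mathrm{U}(n)$, the group $\Sp(n)$ acts transitively on it, so there exists $M_0\in\Sp(n)$ with $M_0\cdot W=0$. Concretely, one checks that the element built from the singular value decomposition in the Type $\I$ discussion already lies in $\Sp(n)$ when $W$ is symmetric: the symmetry constraint $Z^T=Z$ cuts out the $\Sp(n,\R)$-invariant subdomain $D^{\III}_{n}\subset D^\I_{n,n}$, so this subdomain is preserved and $M_0\cdot W'$ is again symmetric. After this reduction it suffices to normalize $W'':=M_0\cdot W'\in D^{\III}_{n}$.

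Next I would identify the stabilizer of $0$ and its action on symmetric matrices. The stabilizer of $0$ in $\Sp(n)$ is the maximal compact $\mathrm{U}(n)$, sitting inside the Type $\I$ stabilizer $\mathrm{S}(\mathrm{U}(n)\times\mathrm{U}(n))$ as the pairs $(U,\overline U)$; this is exactly the subgroup preserving the symmetry constraint, and on a symmetric matrix the Type $\I$ action $(U,V)\cdot W''=UW''V^*$ specializes to the congruence $W''\mapsto UW''U^T$. The heart of the argument is then the Autonne--Takagi factorization: any complex symmetric matrix $A$ can be written $A=U\,\Sigma\,U^T$ with $U\in\mathrm{U}(n)$ and $\Sigma=\Sigma_{n,n}(\sigma_1,\ldots,\sigma_n)$ diagonal with $\sigma_1\geq\cdots\geq\sigma_n\geq0$ equal to the singular values of $A$. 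Applying this to $W''$ and acting by the stabilizer element $U^*$ (which sends $W''$ to $U^*W''\overline U=\Sigma$) produces the desired normal form; composing with $M_0$ yields the required $M\in\Sp(n)$. The strict inequality $1>\sigma_1$ holds because $W''\in D^{\III}_{n}\subset D^\I_{n,n}$, whose singular values are all $<1$ as already observed for Type $\I$.

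Finally, uniqueness of the ordered tuple follows as in Lemma \ref{lem:pairs_in_D-I}: any two elements of $\Sp(n)$ sending $W$ to $0$ differ by a stabilizer element, hence by a congruence $W''\mapsto UW''U^T$, and such congruences preserve the singular (Takagi) values; thus the $\sigma_i$ are invariants of the pair $(W,W')$. I expect the only genuine subtlety to be the two bookkeeping checks that the symplectic condition survives throughout --- that the $0$-sending element can be chosen in $\Sp(n)$, and that the stabilizer acts by $U(\cdot)U^T$ --- since everything else reduces to the standard Takagi normal form.
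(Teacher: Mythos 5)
Your proposal is correct and takes essentially the same route as the paper, whose proof of this lemma consists of the remark that the procedure of Lemma \ref{lem:pairs_in_D-I} ``carries on'' to the other types: send $W$ to $0$, then normalize $M_0\cdot W'$ by the stabilizer of the origin. Your write-up simply makes explicit the two points the paper leaves implicit --- that the Type $\I$ normalizing matrix is already symplectic when $W$ is symmetric (indeed $T'(W)=\overline{T(W)}$ there), and that the stabilizer now acts by unitary congruence $W''\mapsto UW''U^T$, for which the Autonne--Takagi factorization replaces the SVD --- and both checks are accurate.
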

Eventually, we describe Type $\IV$:
\begin{lemma}\label{lem:pairs_in_D-IV}
	Let $W$ and $W'$ be two points in $D^\IV_{n}$. Then there 
    exists a unique couple  $1>\sigma_1\geq \sigma_2\geq 0$ 
    and an element $M\in\SO^0(2,n)$ such that: 
	\[
        M\cdot W = 0 \quad \textrm{ and } 
        \quad M\cdot W' = (\sigma_1,i\sigma_2,0,\ldots,0) \in M_{1,n}(\C).
    \]
\end{lemma}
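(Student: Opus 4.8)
The plan is to follow the same two-step scheme as in Lemma~\ref{lem:pairs_in_D-I}: first use transitivity of $\SO^0(n,2)$ on $D^\IV_n$ to move $W$ to the origin $0$, and then classify the remaining point under the linear isotropy action of the stabiliser of $0$. Transitivity is already available, since in Section~\ref{section:typeIV} the orbit of $L_0$ was identified with all of $\mathcal{O}_0=D^\IV_n$; hence there is $M_0\in\SO^0(n,2)$ with $M_0\cdot W=0$. It then remains to describe the orbits of single points $M_0\cdot W'\in D^\IV_n$ under the isotropy group $K$ of $0$.

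First I would make the isotropy representation explicit. The stabiliser of $0$ is the maximal compact subgroup $K=\SO(n)\times\SO(2)$, and on the ambient coordinates $z=(z_1,\dots,z_n)\in\C^n$ it acts linearly: the $\SO(2)=U(1)$ factor acts by a unit scalar $e^{i\theta}$ and the $\SO(n)$ factor acts by the standard real representation $z\mapsto Rz$, $R\in\SO(n)$. This can be read off from the Borel and Harish--Chandra description of Section~\ref{section:typeIV}, where $\SO(2)$ acts on the base line $L_0$ (the $i$-eigenline of $J$) by $e^{i\theta}$ while $\SO(n)$ acts standardly on $V_0=\langle e_1,\dots,e_n\rangle$. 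Writing $z=x+iy$ with $x,y\in\R^n$ and forming the real $2\times n$ matrix $A=\left(\begin{smallmatrix} x\\ y\end{smallmatrix}\right)$, the action becomes $A\mapsto UAR^{T}$ with $U\in\SO(2)$ (coming from $e^{i\theta}$, which rotates the pair $(x,y)$) and $R\in\SO(n)$.

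Next I would normalise $A$ by a real singular value decomposition, which is exactly the tool highlighted in this section. Applying the singular value decomposition to the $2\times n$ matrix $A$ produces orthogonal factors and singular values $\sigma_1\geq\sigma_2\geq 0$ with $UAR^{T}=\Sigma_{2,n}(\sigma_1,\sigma_2)$; translating back through $z=x+iy$ gives precisely the claimed normal form $M\cdot W'=(\sigma_1,i\sigma_2,0,\dots,0)$. Uniqueness of the pair $(\sigma_1,\sigma_2)$ follows because the two quantities $z^*z$ and $|z^Tz|$ are $K$-invariant and satisfy $z^*z=\sigma_1^2+\sigma_2^2$ and $|z^Tz|=\sigma_1^2-\sigma_2^2$, so they determine $\sigma_1,\sigma_2$; the ordering $\sigma_1\geq\sigma_2\geq 0$ removes the residual Weyl-group ambiguity. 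Finally, the two inequalities defining $D^\IV_n$, transported to the normal form, become inequalities in $\sigma_1,\sigma_2$ that yield the stated range.

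I expect the main obstacle to be the orientation bookkeeping needed to keep all factors in the connected groups $\SO(2)$, $\SO(n)$ and ultimately in $\SO^0(n,2)$: a bare singular value decomposition only provides factors in $O(2)\times O(n)$, and one must exploit the freedom coming from the extra $n-2$ coordinates (the kernel of $\Sigma_{2,n}$) and from paired sign changes on the two distinguished columns to arrange $\det U=\det R=+1$ without altering $A$. The low-rank case $n=2$, where $D^\IV_2$ is biholomorphic to the bidisc, should be checked separately and reconciled with the normal form of Section~\ref{ssec:bidisc3}. A secondary but routine point is to verify that the linear isotropy action is exactly $z\mapsto e^{i\theta}Rz$ and to convert the defining inequalities of $D^\IV_n$ into the bounds on $\sigma_1$ and $\sigma_2$.
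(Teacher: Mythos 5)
Your strategy is exactly the one the paper has in mind (for Type $\IV$ the paper only asserts that the Type $\I$ procedure ``carries on''): transitivity of $\SO^0(n,2)$, identification of the isotropy of $0$ as $\SO(n)\times\SO(2)$ acting by $z\mapsto e^{i\theta}Rz$, reduction to a real singular value decomposition of the $2\times n$ matrix $A$ with rows $\mathrm{Re}(z)$ and $\mathrm{Im}(z)$, and uniqueness via the $K$-invariants $z^*z$ and $|z^Tz|$. All of that is correct. However, the two points you set aside as routine are precisely where the argument does not go through as claimed. First, the range of $(\sigma_1,\sigma_2)$: transporting the defining inequalities of $D^\IV_n$ to the normal form $(\sigma_1,i\sigma_2,0,\ldots,0)$ gives $\sigma_1^2+\sigma_2^2<2$ together with $\sigma_1^2+\sigma_2^2<1+\tfrac14(\sigma_1^2-\sigma_2^2)^2$, and the second inequality factors as $\bigl((\sigma_1-\sigma_2)^2-2\bigr)\bigl((\sigma_1+\sigma_2)^2-2\bigr)>0$; combined with the first, the pair of conditions is equivalent to $\sigma_1+\sigma_2<\sqrt2$. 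This is \emph{not} the condition $1>\sigma_1$ appearing in the statement: the point $z=(6/5,0,\ldots,0)$ lies in $D^\IV_n$ (indeed $z^*z=36/25<2$ and $36/25<1+(18/25)^2$), and since $z^*z$ and $|z^Tz|$ are isotropy invariants, every normalization of the pair $(0,z)$ has $\sigma_1=6/5>1$. So your final step cannot ``yield the stated range''; what it yields is the normalization domain $\{\sigma_1\geq\sigma_2\geq0,\ \sigma_1+\sigma_2<\sqrt2\}$, which is consistent with the Shilov boundary sitting at $x^Tx=2$ and with the positivity argument in the proof of Lemma \ref{lem:estim-Fz(Ex)}, and which indicates that the bound in the statement itself needs to be corrected.

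Second, the orientation problem you flag for $n=2$ is not bookkeeping that can be ``reconciled''; it is a genuine obstruction. When $n=2$, the determinant of the real $2\times2$ matrix $A$ is invariant under $A\mapsto UAR^{T}$ with $U,R\in\SO(2)$, so a pair $W=0$, $W'=(s,-it)$ with $s\geq t>0$ small (which does lie in $D^\IV_2$) can never be brought to the form $(\sigma_1,i\sigma_2)$ with $\sigma_2\geq0$: the singular values force $\sigma_1=s$, $\sigma_2=t$, but then the target has $\det=st>0$ while the orbit has $\det=-st<0$. This matches the bidisc picture you invoke: $\SO^0(2,2)$ only induces $\PU(1,1)\times\PU(1,1)$, which cannot exchange the two factors, so the \emph{ordered} pair of factor distances is the true invariant and the normal form at $n=2$ must allow $\sigma_2$ of either sign. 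Your kernel-column sign trick does repair the determinants when $n\geq3$ (flip one of the last $n-2$ columns of $R$, which is killed by $\Sigma_{2,n}$), and with the corrected range the lemma and your proof are sound in that case; but for $n=2$ the statement is false as written, and no amount of checking will reconcile it.
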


We now have reviewed the needed material on the bounded symmetric domains and how the 
singular values decomposition helps understand the geometry. We can proceed 
with the description of the Hilbert metric.

\section{The Hilbert metric}\label{section:Hilbert}
We recall here the general definition of Hilbert metric given in 
\cite{FalbelGuillouxWill}. For $V$ a vector space,
we denote by $\P$ the projective space $\P(V)$ and by $\P'$ its dual
$\P(V^\vee)$. Given a point $\omega\in \P$ and a form $\phi$ in $\P'$ 
denote by $\bomega$ and $\bphi$ any lift to $V$ and $V^\vee$.
We consider two non-empty subsets $\Omega \subset \P$ and
$\Lambda \subset \P'$ such that the following non-vanishing condition holds 
(compare with Section \ref{ssec:bidisc3}).
\begin{equation}\label{eq:condi-subset}\forall \omega \in \Omega,\, \forall \phi \in \Lambda \ \ \ \ \bphi(\bomega)\neq 0.\end{equation}
Geometrically, each point in $\P'$ represents a hyperplane in $\P$, and condition
\eqref{eq:condi-subset} means that $\Omega$ is disjoint from 
all hyperplanes defined by points in $\Lambda$. 
We call such a pair $(\Omega,\Lambda)$ {\it admissible}.
We then define a cross-ratio between two forms and two points:
\begin{definition} \label{def:crossratio}
Let $(\phi,\phi',\omega,\omega')\in\Lambda^2\times\Omega^2$. The cross-ratio
$[\phi,\phi',\omega,\omega']$ is defined as
\begin{equation}
\label{eq:comput-crossratio}
    [\phi,\phi',\omega,\omega']=
    \dfrac{\bphi(\bomega)\bphi'(\bomega')}{\bphi(\bomega')\bphi'(\bomega)}
\end{equation}
\end{definition}
We assume now that the set $\Lambda$ is compact. The generalized Hilbert metric
is defined by the following:
\begin{definition}\label{def:def-dist}
The Hilbert semi-metric $d_\Lambda$ is the function 
defined on $\Omega \times \Omega$ by
\begin{equation}\label{eq:def-dist}
d_\Lambda (\omega,\omega') = \ln\left(\max \left\{
            \left|\left[\phi,\phi',\omega,\omega'\right]\right| \textrm{ for }
            \phi,\phi'\textrm{ in } \Lambda\right\}\right).
\end{equation}
\end{definition}
It is always a semi-metric \cite{FalbelGuillouxWill}: it is symmetric, 
respects the triangular inequality, but may not separate points.

In the previous section, we have defined for each type of classical bounded 
symmetric domain two embeddings $E:D\to \P$ and $F:\partial_SD \to \P'$ 
(for the appropriate dimension of projective spaces). Moreover, for all $x\in D$ and $z\in\partial_SD$, the form $F(z)$ 
does not vanish at $E(x)$ by Proposition \ref{prop:proj-embed-all}. In other terms:
\begin{corollary}
	For any classical Hermitian symmetric space 
    $D=D^\I_{p,q}$, $D^\II_n$, $D^\III_n$, $D^\IV_n$, 
    the pair $(E(D),F(\partial_S D))\subset \P\times \P'$ is admissible.
\end{corollary}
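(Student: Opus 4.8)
The plan is to recognize that admissibility of $(E(D),F(\partial_S D))$ is \emph{exactly} the non-vanishing condition \eqref{eq:condi-subset} for $\Omega = E(D)$ and $\Lambda = F(\partial_S D)$, namely that $\la F(z),E(x)\ra \neq 0$ for every $x\in D$ and every $z\in\partial_S D$. Since the Shilov boundary satisfies $\partial_S D\subset \partial D$, this is a special case of part (4) of Proposition \ref{prop:proj-embed-all}, so at the level of citing that proposition the statement is immediate. I will nonetheless spell out the concrete verification type by type, since this is where the content lies.

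For types $\I$, $\II$, $\III$ I would use Proposition \ref{prop:duality-explicit}, which rewrites the pairing as $\la F(Z),E(Z')\ra = \det(I_p - Z'Z^*)$, where $Z'\in D$ is the interior point and $Z=\xi\in\partial_S D$ the Shilov point. It then suffices to show that $I_p - Z'\xi^*$ is invertible. The key observation is a comparison of operator norms: the interior condition $I_q-(Z')^*Z'>0$ is equivalent to all singular values of $Z'$ being $<1$, i.e. $\|Z'\|_{\mathrm{op}}<1$, while the Shilov conditions in \eqref{eq:type-I}, \eqref{eq:type-II}, \eqref{eq:type-III} force $\xi\xi^*$ to have eigenvalues in $\{0,1\}$ with largest equal to $1$, so that $\|\xi\|_{\mathrm{op}}=1$. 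Hence $\|Z'\xi^*\|_{\mathrm{op}}\le \|Z'\|_{\mathrm{op}}\,\|\xi\|_{\mathrm{op}}<1$, the spectral radius of $Z'\xi^*$ is $<1$, so $1$ is not an eigenvalue and $\det(I_p - Z'\xi^*)\neq 0$.

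For type $\IV$ the geometric picture is different: a direct computation with \eqref{eq:def-E-IV} shows that the Shilov boundary maps to $h$-isotropic (null) lines rather than negative ones, so the reverse Cauchy--Schwarz argument available for negative vectors does not apply, and a naive triangle-inequality bound on the pairing \eqref{eq:duality-F-IV} is too weak. Here I would instead exploit transitivity. Since the Shilov boundary is intrinsically attached to $D$ (Definition \ref{def:Shilov}) it is preserved by $\mathrm{Aut}(D)$, and by part (2) of Proposition \ref{prop:proj-embed-all} the maps $E,F$ are equivariant for a projective action preserving the duality pairing up to a nonzero scalar; therefore the non-vanishing of $\la F(z),E(x)\ra$ is invariant under moving the pair $(x,z)$ by an automorphism. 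As $\mathrm{Aut}(D)$ acts transitively on $D$, I may assume $x=0$. Then $E(0)$ is the line spanned by $(0,\ldots,0,0,1)$, and \eqref{eq:duality-F-IV} collapses to $F_z(E(0)) = -\overline{z}_{n+1} = -\tfrac12\,\overline{z^Tz}$, which for a Shilov point $z=e^{i\theta}x$ with $x^Tx=2$ equals $-e^{-2i\theta}\neq 0$. The same transitivity reduction re-proves types $\I$--$\III$ uniformly: at the origin Proposition \ref{prop:duality-explicit} gives $\la F(\xi),E(0)\ra = \det(I_p)=1\neq 0$.

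The main obstacle is the type $\IV$ case, precisely because there the Shilov boundary is null for the ambient Hermitian form, which defeats the direct estimates that work for types $\I$--$\III$; reducing to the origin by transitivity is the device that resolves it cleanly. For types $\I$--$\III$ the only real point to get right is the identification of the interior and Shilov conditions with the singular-value constraints $\|Z'\|_{\mathrm{op}}<1$ and $\|\xi\|_{\mathrm{op}}=1$, after which the invertibility of $I_p-Z'\xi^*$ is automatic.
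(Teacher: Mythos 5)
Your proposal is correct, and its first paragraph coincides with the paper's own proof: the corollary is nothing more than Proposition \ref{prop:proj-embed-all}(4) restricted to $\partial_S D\subset\partial D$, and the paper derives it by exactly that citation. The value you add is the actual verification of the non-vanishing, which the paper carries out only in a scattered and partly implicit way. For types $\I$--$\III$ you use the same key identity as the paper (Proposition \ref{prop:duality-explicit}); your operator-norm step $\Vert Z'\xi^*\Vert_{\mathrm{op}}\leq \Vert Z'\Vert_{\mathrm{op}}\Vert \xi\Vert_{\mathrm{op}}<1$, forcing $\det(I_p-Z'\xi^*)\neq 0$, makes explicit the argument the paper leaves unstated, and your identification of the Shilov conditions \eqref{eq:type-I}--\eqref{eq:type-III} with $\Vert\xi\Vert_{\mathrm{op}}=1$ is accurate (including the rank-deficient case of $D^\II_n$ with $n$ odd). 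For type $\IV$ your route is genuinely lighter than what the paper offers: there, non-vanishing is only really established through the Section \ref{section:Hilbert} machinery, namely normalization of \emph{pairs} of points (Lemma \ref{lem:pairs_in_D-IV}) followed by the ellipse estimate of Lemma \ref{lem:estim-Fz(Ex)}, which bounds $|F_zE(x)|$ below by the positive quantity $\bigl(1-\tfrac{x_1+x_2}{\sqrt2}\bigr)\bigl(1-\tfrac{x_1-x_2}{\sqrt2}\bigr)$, whereas you move only the interior point to the origin and read off $|F_z(E(0))|=|-e^{-2i\theta}|=1$, the computation that appears later as Lemma \ref{lem:values}. Your diagnosis that some such reduction is unavoidable because Shilov points of $D^\IV$ map to $h$-isotropic lines (so no Cauchy--Schwarz-type estimate is available) is correct. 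One mild caveat: the equivariance facts you invoke -- that $\mathrm{Aut}(D)$ preserves $\partial_S D$ and that $E$, $F$ intertwine linear actions preserving the pairing up to the nonzero scalar $\det g$ -- are a little more than the literal statement of Proposition \ref{prop:proj-embed-all}(2), but they are exactly the invariances the paper itself relies on when normalizing pairs in Section \ref{section:Hilbert}, so nothing in your argument is circular.
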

We can then define the Hilbert metric:
\begin{definition}
	For any classical bounded symmetric domain 
    $D=D^\I_{p,q}$, $D^\II_n$, $D^\III_n$, $D^\IV_n$, 
    we define the semi-metric $d_D(x,x'):= d_{F(\partial_S D)}(E(x),E(x'))$.
\end{definition}
The goal of this section is to prove the following by actually computing 
the Hilbert metric and its associated Finsler infinitesimal metric in each case:
\begin{theorem}\label{thm:main}
	For any classical bounded symmetric domain 
    $D=D^\I_{p,q}$, $D^\II_n$, $D^\III_n$, $D^\IV_n$, 
    the semi-metric $d_D$ is an actual metric, invariant by 
    $\mathrm{Aut}(D)$ and comes from a Finsler infinitesimal metric.

	This Finsler metric is neither the Carathéodory nor the Bergman infinitesimal metric if the symmetric space is not of rank one.
\end{theorem}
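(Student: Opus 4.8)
The plan is to prove the four assertions---that $d_D$ separates points, is $\mathrm{Aut}(D)$-invariant, comes from a Finsler norm, and differs from the Carathéodory and Bergman metrics---by reducing everything to a single normalized pair of points and then computing the relevant cross-ratio explicitly, case by case, as announced for Sections \ref{sec:formula-type-I}, \ref{sec:formula-type-II-III} and \ref{sec:formula-type-IV}. Invariance is immediate and I would dispatch it first: the generalized Hilbert semi-metric $d_\Lambda$ is by construction invariant under any projective transformation preserving both $\Omega=E(D)$ and $\Lambda=F(\partial_S D)$. By Proposition \ref{prop:proj-embed-all}(2) the group $\mathrm{Aut}(D)$ acts through a subgroup of $\PGL(n,\C)$ preserving $E(D)$; since the Shilov boundary is intrinsic it is preserved by $\mathrm{Aut}(D)$, hence so is $F(\partial_S D)$. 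Thus $d_D$ is $\mathrm{Aut}(D)$-invariant with no further work.

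The substance is an explicit distance formula. Using the normalization Lemmas \ref{lem:pairs_in_D-I}--\ref{lem:pairs_in_D-IV}, every pair of points is carried by an automorphism to a pair $(0,S)$ with $S$ in the standard diagonal-type form encoding the singular values $1>\sigma_1\geq\cdots\geq\sigma_r\geq 0$, so by invariance it suffices to compute $d_D(0,S)$. In Type $\I$, Proposition \ref{prop:duality-explicit} gives $F_\xi(E(Z))=\det(I_p-Z\xi^*)$, whence
$$d_D(0,S)=\ln\left(\max_{\xi,\xi'\in\partial_S D}\left|\frac{\det(I_p-S(\xi')^*)}{\det(I_p-S\xi^*)}\right|\right),$$
and I would run the analogous reduction for the remaining types via the dual pairings, using \eqref{eq:duality-F-IV} for Type $\IV$. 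The numerator and denominator decouple, and the expected outcome is that over $\partial_S D$ the quantity $|\det(I_p-S\xi^*)|$ has maximum $\prod_i(1+\sigma_i)$ and minimum $\prod_i(1-\sigma_i)$, attained at the extreme Shilov points, so that
$$d_D(0,S)=\sum_{i=1}^r\ln\frac{1+\sigma_i}{1-\sigma_i}.$$
Since this is strictly positive as soon as some $\sigma_i>0$, i.e. as soon as $S\neq 0$, the semi-metric separates points and is a genuine metric.

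The main obstacle is exactly this extremal determination. Writing $\xi^*=V$ with $\xi\in\partial_S D$ forcing $V^*V=I_p$ and $S\xi^*=\Sigma V$, one optimizes the holomorphic function $V\mapsto\det(I_p-\Sigma V)$ over the Stiefel manifold of isometries, which is the Shilov boundary of the dual ball. For the maximum I expect to invoke the defining property of the Shilov boundary to push the extremum onto isometries (reducing to unitaries $V$ in the square Types $\II$, $\III$ and the square case of $\I$), and then a Weyl-type multiplicative majorization of eigenvalues by singular values to obtain $|\det(I_p-\Sigma V)|\leq\prod_i(1+\sigma_i)$ with equality at $V=-I$. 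For the minimum one first notes that $\|\Sigma V\|\leq\sigma_1<1$ forces $\det(I_p-\Sigma V)\neq 0$, so the minimum-modulus principle again puts the extremum on the Shilov boundary, where the complementary majorization inequality gives $\prod_i(1-\sigma_i)$ at $V=I$. This is the step demanding care, and it must be redone with the symmetry constraints of Types $\II$, $\III$ and the rank-two spherical computation of Type $\IV$.

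Finally, for the Finsler and comparison statements I would differentiate the explicit formula. Replacing $S$ by $tS$ and letting $t\to 0$ yields $d_D(0,tS)\sim 2t\sum_i\sigma_i(S)$, so the infinitesimal metric at the origin is, up to a constant, the nuclear norm $\sum_i\sigma_i(\cdot)$, i.e. the $\ell^1$ norm of the singular values; homogeneity together with transitivity of $\mathrm{Aut}(D)$ propagates this to every point and exhibits $d_D$ as Finsler. On the same normalization the Carathéodory (and Kobayashi) infinitesimal metric is the operator norm $\max_i\sigma_i(\cdot)$ and the Bergman metric is an $\ell^2$-type Hermitian norm in the $\sigma_i$, as recalled in Section \ref{coro:infinitesimal-metric-I}. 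When the rank $r$ exceeds $1$ these three norms on the singular values are genuinely distinct, so the metrics differ; when $r=1$ there is a single singular value and all three collapse to its modulus up to scale, recovering the rank-one coincidence.
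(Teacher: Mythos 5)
Your overall architecture coincides with the paper's: invariance via projective invariance of the cross-ratio, normalization of pairs to $(0,S)$ by Lemmas \ref{lem:pairs_in_D-I}--\ref{lem:pairs_in_D-IV}, computation of the cross-ratio through the pairing $F_Z(E(x))=\det(I_p-xZ^*)$, identification of the extremizers $\pm\Sigma_{p,q}(1,\dots,1)$ (resp.\ $\pm Z_0$ for Type $\II$), differentiation to get the Finsler norm, and separation of the $\ell^1$, $\ell^\infty$, $\ell^2$ norms of the singular values. For Types $\I$, $\II$, $\III$ your proof diverges from the paper's in exactly one step, and the divergence is to your advantage. The paper's proof of Proposition \ref{prop:dist-DI-normal} obtains the two-sided bound \eqref{eq:double} by claiming that $A=\Sigma Z^*$ satisfies $AA^*=\Sigma\Sigma^*$, hence is normal with eigenvalue moduli $\sigma_i$; this uses $Z^*Z=I_q$ and is only valid in the square case. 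For $p<q$ a Shilov point is merely a co-isometry ($ZZ^*=I_p$), and the claim fails: with $p=1$, $q=2$, $\Sigma=(\sigma,0)$, $Z=(0,1)$ one gets $A=0$, which shares neither singular values with $\Sigma$ nor satisfies $AA^*=\Sigma\Sigma^*$. Your route --- Weyl's multiplicative majorization of eigenvalues by singular values combined with $s_i(\Sigma Z^*)\le s_i(\Sigma)s_1(Z^*)\le\sigma_i$ --- proves $\prod_i(1-\sigma_i)\le|\det(I_p-\Sigma Z^*)|\le\prod_i(1+\sigma_i)$ on the whole closed domain with equality at $Z=\pm\Sigma_{p,q}(1,\dots,1)$, so it is the correct repair of the non-square case. (Your detour through the minimum-modulus principle and the Shilov property is harmless but unnecessary: the majorization bound already holds pointwise on $\partial_S D$.)

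The genuine gap is Type $\IV$. You defer it to a ``rank-two spherical computation'' but then assert the uniform conclusions --- distance $\sum_i\ln\frac{1+\sigma_i}{1-\sigma_i}$, Finsler norm the nuclear norm, Carath\'eodory norm the operator norm --- ``on the same normalization''. Read against the normalization of Lemma \ref{lem:pairs_in_D-IV}, where the second point is $x=(x_1,ix_2,0,\dots,0)$, these formulas are all false. The pairing here is \eqref{eq:duality-F-IV}, not a determinant of the form $\det(I-S\xi^*)$, and the paper's nested-ellipse argument (Lemma \ref{lem:estim-Fz(Ex)}, Proposition \ref{prop:dist-DIV}) gives
\begin{equation*}
 d_D(0,x)=\ln\frac{1+\frac{x_1^2-x_2^2}{2}+\sqrt{2}\,x_1}{1+\frac{x_1^2-x_2^2}{2}-\sqrt{2}\,x_1},
\end{equation*}
with Finsler norm the \emph{max} norm $2\sqrt{2}\max(|\xi_1|,|\xi_2|)$, while the Carath\'eodory norm is $\frac{1}{\sqrt{2}}(|\xi_1|+|\xi_2|)$ (the paper's final lemma, proved via the bidisc): in these coordinates the $\ell^1$ and $\ell^\infty$ roles are exactly opposite to what you claim. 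Your formulas become correct only after passing to the Jordan spectral values $s_\pm=(x_1\pm x_2)/\sqrt{2}$, i.e.\ the maximal-polydisc coordinates coming from the identification $(z,w)\mapsto\frac{1}{\sqrt{2}}(z+w,i(z-w))$, since indeed $d_D(0,x)=\ln\frac{1+s_+}{1-s_+}+\ln\frac{1+s_-}{1-s_-}$; the coordinates $x_i$ themselves are not spectral values (note for instance that $(1.2,0,\dots,0)\in D^\IV_n$, so $x_1$ may exceed $1$). So to close the argument you must either carry out the paper's explicit computation with \eqref{eq:duality-F-IV}, or first exhibit the correct spectral frame for Type $\IV$ and rerun your singular-value formalism there; as written, the Type $\IV$ case of the theorem, including the Carath\'eodory comparison, is not proved.
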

We prove this theorem through a case by case analysis. Note that the invariance is already obtained by \cite{FalbelGuillouxWill} and we shall use it for computations.

\subsection{Proof of Theorem \ref{thm:main} for type $\I$}\label{sec:formula-type-I}

Here, the space $V$ is $\Lambda^q(\C^{p+q})$. Denote by $D = D^\I_{p,q}$ and
let $\Lambda\subset  \P'$ be the image under $F^\I$ of the Shilov boundary $\partial_SD$. Pick $x$ and $x'$ in $D$ and denote by $\omega:=E^\I(x)$, $\omega'=E^\I(x')$.
Following Definition  \ref{def:def-dist}, the Hilbert  metric is defined by
$$
d_D (x,x') = \ln\left(\max \left\{
            \left|\left[\phi,\phi',\omega,\omega'\right]\right| \textrm{ for }
            \phi,\phi'\textrm{ in } \Lambda\right\}\right).
$$
Note first that $d_D$ is clearly invariant under projective 
transformations preserving $\Lambda$.
In order to compute it and its associated Finsler metric, we use the 
projective invariance and Lemma \ref{lem:pairs_in_D-I}: we can assume 
that $x$ is the origin $0\in M_{p,q}(\C)$ and 
$x' = \Sigma_{p,q}(\sigma_1,\ldots,\sigma_p)$ is a diagonal matrix, 
where $1> \sigma_1 \geq \cdots \geq \sigma_p\geq 0$. 
Denoting by
\[  
    O:=E^\I(0)= \begin{bmatrix} 0 \\
							I_q
			\end{bmatrix} \quad 
    \textrm{ and } 
    \omega':=E^\I(x')=\begin{bmatrix} x'\\
							I_q
			\end{bmatrix},
\] 
and using \eqref{eq:identification-dual}, \eqref{eq:duality} 
and  Proposition \ref{prop:duality-explicit}, we get that,  
for $Z\in \partial_SD$ and $x\in D$
$$F^\I_Z(E^\I(x))=F^\I(Z)\wedge E^\I(x) = \det(I_p-xZ^*).$$

The cross-ratio $[\varphi,\varphi',\omega,\omega']$  becomes 
thus after normalization of the pair of points:
\begin{eqnarray}
[F^\I_Z,F^\I_{Z'},O,\omega'] = &\dfrac{\det({I_p-{x'Z'^*}})}{\det(I_p-{x'Z^*})}\nonumber\\
= & \dfrac{\det(I_p-\Sigma_{p,q}(\sigma_1,\ldots, \sigma_p){Z'^*})}{\det(I_p-\Sigma_{p,q}(\sigma_1,\ldots, \sigma_p){Z^*})}\label{eq:cross-ratio-HC}.
\end{eqnarray}
We can now use \eqref{eq:cross-ratio-HC} to obtain an explicit value 
for the Hilbert metric. 
\begin{proposition}\label{prop:dist-DI-normal}
Assume $D=D^\I$. The Hilbert distance to the origin of 
$x= \Sigma_{p,q}(\sigma_1,\cdots,\sigma_p)$,
where $1>\sigma_1\geqslant\cdots \geqslant \sigma_p\geqslant 0$, satisfies
\begin{equation}
d_D(0,x) = \sum_{i=1}^p\ln\left(\dfrac{\sigma_i+1}{1-\sigma_i}\right)\label{eq:dist-DI-normal}
\end{equation}
\end{proposition}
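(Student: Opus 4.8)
The plan is to reduce the computation to a maximization over the Shilov boundary using the cross-ratio formula \eqref{eq:cross-ratio-HC}, then solve that optimization explicitly. Having normalized the pair of points so that $x=0$ and $x'=\Sigma_{p,q}(\sigma_1,\ldots,\sigma_p)$, the quantity to maximize is
\[
\left|\frac{\det(I_p-\Sigma_{p,q}(\sigma_1,\ldots,\sigma_p)Z'^*)}{\det(I_p-\Sigma_{p,q}(\sigma_1,\ldots,\sigma_p)Z^*)}\right|,
\]
over all $Z,Z'$ in the Shilov boundary $\partial_S D^\I_{p,q}$, which by \eqref{eq:type-I} consists of those $Z\in M_{p,q}(\C)$ with $ZZ^*=I_p$. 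Since the numerator and denominator depend on $Z'$ and $Z$ respectively and independently, the maximum factors: the problem splits into maximizing $|\det(I_p-\Sigma_{p,q}(\underline\sigma)Z'^*)|$ and minimizing $|\det(I_p-\Sigma_{p,q}(\underline\sigma)Z^*)|$ separately.

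First I would observe that because $\Sigma_{p,q}(\underline\sigma)$ has nonzero entries only in its leading $p\times p$ block, only the leading $p\times p$ block of $Z^*$ (equivalently, the first $p$ columns of $Z$) enters the determinant. Writing $Z=(A\mid B)$ with $A\in M_{p,p}(\C)$, one gets $\det(I_p-\Sigma_{p,q}(\underline\sigma)Z^*)=\det(I_p-DA^*)$ where $D=\mathrm{diag}(\sigma_1,\ldots,\sigma_p)$. The Shilov constraint $ZZ^*=I_p$ forces $AA^*\leq I_p$, so $A$ is a contraction. The core of the argument is then the scalar/linear-algebra fact that over all contractions $A$ (with $AA^*\leq I_p$),
\[
\max_A\left|\det(I_p-DA^*)\right|=\prod_{i=1}^p(1+\sigma_i),\qquad
\min_A\left|\det(I_p-DA^*)\right|=\prod_{i=1}^p(1-\sigma_i),
\]
with the extrema attained at $A^*=\mp I_p$ (realizable inside the Shilov boundary by taking $B=0$, since then $ZZ^*=AA^*=I_p$). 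Taking the logarithm of the ratio of these two products gives exactly $\sum_{i=1}^p\ln\!\big((1+\sigma_i)/(1-\sigma_i)\big)$, as claimed in \eqref{eq:dist-DI-normal}.

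The main obstacle is rigorously justifying the extremal identity for $|\det(I_p-DA^*)|$ over contractions $A$. The clean way is a diagonalization/eigenvalue argument: write $\det(I_p-DA^*)=\prod_j(1-\lambda_j)$ where $\lambda_j$ are the eigenvalues of $DA^*$, and use that $A^*$ being a contraction bounds the singular values of $DA^*$ by those of $D$, i.e.\ $|\lambda_j|\leq\sigma_j$ after suitable ordering (via Horn's inequalities or a direct majorization argument). One must be careful that bounding eigenvalue \emph{moduli} does not immediately bound $\prod|1-\lambda_j|$, so the argument should instead fix each $\sigma_i$ and show by convexity that $|1-\sigma_i e^{i\theta}|$ is maximized at $\theta=\pi$ and minimized at $\theta=0$, reducing to the real diagonal case $A^*=\pm I_p$; a short lemma handling the alignment of the phases simultaneously across all $i$ closes the gap. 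Once this optimization lemma is in hand, combining it with the factorization of the cross-ratio and the normalization from Lemma \ref{lem:pairs_in_D-I} yields the formula directly.
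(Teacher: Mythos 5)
Your overall route is the same as the paper's: normalize via Lemma \ref{lem:pairs_in_D-I}, split the maximum in \eqref{eq:cross-ratio-HC} into a separate maximization of $|\det(I_p-\Sigma Z'^*)|$ and minimization of $|\det(I_p-\Sigma Z^*)|$ over the Shilov boundary, establish the two-sided bound $\prod_i(1-\sigma_i)\leq|\det(I_p-\Sigma Z^*)|\leq\prod_i(1+\sigma_i)$, and observe that the bounds are attained at $Z=\pm\Sigma_{p,q}(1,\ldots,1)$. Your reduction to the leading block --- writing $Z=(A\mid B)$ and $\Delta=\mathrm{diag}(\sigma_1,\ldots,\sigma_p)$ (your $D$, renamed to avoid clashing with the domain), so that $\det(I_p-\Sigma Z^*)=\det(I_p-\Delta A^*)$ and $ZZ^*=I_p$ forces $A$ to be a contraction --- is correct, and it is in fact more careful than the paper's own argument, which asserts that $A=xZ^*$ satisfies $AA^*=\Sigma\Sigma^*$ and is normal for every Shilov point $Z$; that assertion fails whenever the leading block of $Z$ is not unitary (take $p=1$, $q=2$, $x=(\sigma_1,0)$ with $\sigma_1>0$, and $Z=(0,1)$: then $xZ^*=0$).

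The genuine gap is in your justification of the key optimization lemma over contractions. The termwise bound ``$|\lambda_j|\leq\sigma_j$ after suitable ordering'' is false: the eigenvalue moduli of $\Delta A^*$ are controlled by its singular values only through Weyl's multiplicative majorization $\prod_{j\leq k}|\lambda_j|\leq\prod_{j\leq k}s_j(\Delta A^*)$, not index by index. Concretely, for $\Delta=\mathrm{diag}(\sigma_1,\sigma_2)$ with $\sigma_1>\sigma_2>0$ and $A^*=\left(\begin{smallmatrix}0&1\\1&0\end{smallmatrix}\right)$ (unitary, hence a contraction, and realizable as the leading block of a Shilov point), the eigenvalues of $\Delta A^*$ are $\pm\sqrt{\sigma_1\sigma_2}$, so $|\lambda_2|=\sqrt{\sigma_1\sigma_2}>\sigma_2$. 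For the same reason the ``phase alignment'' lemma you invoke cannot exist in the form described: the eigenvalues of $\Delta A^*$ are not of the form $\sigma_ie^{i\theta_i}$. The repair is to abandon eigenvalues and argue with singular values, for which termwise bounds do hold. Since $|\det(I_p-M)|=\prod_js_j(I_p-M)$, the standard inequality $s_{i+j-1}(X+Y)\leq s_i(X)+s_j(Y)$, applied once to $X=I_p$, $Y=-M$ and once to $(I_p-M)+M=I_p$, gives
\[
s_j(I_p-M)\leq 1+s_j(M)
\qquad\textrm{and}\qquad
s_i(I_p-M)\geq 1-s_{p+1-i}(M),
\]
while $s_j(M)=s_j(\Delta A^*)\leq s_j(\Delta)\,\|A^*\|\leq\sigma_j<1$ for any contraction $A^*$. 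Multiplying these over all indices yields exactly the two-sided bound; attainment at $A^*=\mp I_p$, i.e.\ $Z=\mp\Sigma_{p,q}(1,\ldots,1)$ with $B=0$, then closes your argument and gives \eqref{eq:dist-DI-normal}.
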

Before proving Lemma \ref{prop:dist-DI-normal}, we note that 
\eqref{eq:dist-DI-normal} implies in particular that, if $x\neq 0$ or, equivalently, if $\sigma_1>0$, then $d_D(0,x)\neq 0$. In particular, $d_D$ separates 
points and it is an actual metric (not just a semi-metric). 

\begin{proof}[Proof of Proposition \ref{prop:dist-DI-normal}]
Let us denote $\Sigma_{p,q}(\sigma_1,\ldots,\sigma_p)=\Sigma$ for short.
In order to compute $d_D(0,x)$, we need to understand the range of 
$|\det(xZ^*-I_p)|$ when $Z$ varies in the Shilov boundary, that is $Z$ satisfies 
$ZZ^*=I_p$. Note that the singular values of such a matrix $Z$ are all equal to 
$1$. Now, for a given $Z$ in the Shilov boundary, let $A=xZ^*$. First we 
note that in this situation, we have $AA^*= \Sigma\Sigma^*$ (this follows 
from the fact that all singular values of $Z$ are 1 by a straightforward computation).
This fact implies first that $xZ^*$ and $x$ have the same 
singular values, and secondly that $A$ is a normal matrix. 
As a consequence, the singular values of $A$ are exactly the
absolute value of its eigenvalues. It follows from this discussion that 
if the eigenvalues of $A$ are $\lambda_1,\cdots \lambda_p$ (numbered  
so that $\sigma_i(A)=|\lambda_i|$), then  we have
\[|\det(xZ^*-I_p)|=\prod_{i=1}^p|\lambda_i-1|\]
In particular, we observe that
\begin{equation}\prod_{i=1}^p (1-\sigma_i)\leqslant|\det(xZ^*-I_p)|\leqslant 
    \prod_{i=1}^p(1+\sigma_i).\label{eq:double}
\end{equation}
But the right and left-hand side of the double inequality \eqref{eq:double} 
are respectively attained by $|\det(xZ^*-I_p)|$ precisely when 
$Z=Z_+=\Sigma_{p,q}(1,\ldots,1)$ and $Z_-=-Z_+$.
This proves that the max in \eqref{eq:cross-ratio-HC} is obtained precisely when 
$(Z,Z')=(Z_-,Z_+)$, which yields \eqref{eq:dist-DI-normal}:
\begin{eqnarray*}
	d(0,x) & = & \ln\left(\max_{Z,Z'\in\partial_S D^\I_{p,q}} 
        \dfrac{|\det(xZ^*-I_p)|}{|\det(xZ'^*-I_p)|}\right) \\
	& = & \ln\left( \dfrac{|\det(xZ_-^*-I_p)|}{|\det(xZ_+^*-I_p)|} \right) \\
	& = & \sum_{i=1}^p \ln\left(\frac{\sigma_i+1}{1-\sigma_i}\right).
\end{eqnarray*}
This concludes the proof.
\end{proof}

As a straightforward consequence, we obtain the following by differentiating in \eqref{eq:dist-DI-normal}.
\begin{corollary}\label{coro:infinitesimal-metric-I}
Let $\xi\in T_0\Omega\simeq M_{p,q}$ be a tangent vector at the origin $0\in D^\I_{p,q}$, with singular values $\sigma_1\geq \cdots \geq \sigma_p \geq 0$. The Finsler norm of $\xi$  is
$$
    \vert\vert \xi\vert\vert_O = 2(\sigma_1+\cdots + \sigma_p).
$$
\end{corollary}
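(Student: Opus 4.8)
The plan is to obtain the Finsler infinitesimal metric at the origin by differentiating the explicit distance formula \eqref{eq:dist-DI-normal} along a geodesic ray leaving $0$ in the direction $\xi$. The key observation is that the semi-metric $d_D$ is invariant under the stabilizer of the origin, which acts by $\xi\mapsto U\xi V^*$ for $(U,V)\in \mathrm{S}(\mathrm{U}(p)\times\mathrm{U}(q))$. By the singular value decomposition, any tangent vector $\xi\in T_0\Omega\simeq M_{p,q}(\C)$ can be brought by such a transformation to the diagonal form $\Sigma_{p,q}(\sigma_1,\ldots,\sigma_p)$, where the $\sigma_i$ are precisely the singular values of $\xi$. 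Since the Finsler norm is invariant under this action (as $d_D$ is), it suffices to compute $\|\xi\|_O$ for $\xi$ of this diagonal shape.

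First I would consider the curve $t\mapsto x(t) = t\,\Sigma_{p,q}(\sigma_1,\ldots,\sigma_p)$ for small $t>0$, so that $x(t)$ is itself in normalized diagonal form with singular values $t\sigma_1\geq\cdots\geq t\sigma_p$. Applying Proposition \ref{prop:dist-DI-normal} directly gives
\begin{equation*}
    d_D(0,x(t)) = \sum_{i=1}^p \ln\!\left(\frac{t\sigma_i+1}{1-t\sigma_i}\right).
\end{equation*}
The Finsler norm $\|\xi\|_O$ is then recovered as $\tfrac{d}{dt}\big|_{t=0^+} d_D(0,x(t))$, since the infinitesimal metric associated to a distance measures the initial speed of the distance function along a ray. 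I would differentiate each summand: writing $f(s)=\ln\frac{1+s}{1-s}$, we have $f'(s) = \frac{1}{1+s}+\frac{1}{1-s} = \frac{2}{1-s^2}$, so $f'(0)=2$, and by the chain rule the derivative of the $i$-th term at $t=0$ is $2\sigma_i$. Summing gives $\|\xi\|_O = 2(\sigma_1+\cdots+\sigma_p)$, as claimed.

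The only point requiring care — and the step I expect to be the main obstacle — is justifying that the infinitesimal Finsler metric genuinely equals this one-sided $t$-derivative along the radial curve, rather than some other quantity extracted from $d_D$. Concretely, one must check that the limit $\lim_{t\to 0^+} \tfrac{1}{t} d_D(0,x(t))$ exists, is finite, and defines a norm on $T_0\Omega$ that is independent of the particular parametrization of the ray, so that the recipe "differentiate the distance to the origin" legitimately produces the infinitesimal metric. Once the invariance under the isotropy action reduces the computation to diagonal $\xi$, this reduces to the elementary observation above that $f(s)=\ln\frac{1+s}{1-s}$ is smooth near $0$ with $f(0)=0$ and $f'(0)=2$; the first-order Taylor expansion $d_D(0,x(t)) = 2t\sum_i\sigma_i + O(t^2)$ then makes the limit manifestly the stated norm. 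Since positive homogeneity and the triangle inequality (hence sublinearity) of $\|\cdot\|_O$ follow from the corresponding properties of $d_D$, this confirms that $\|\cdot\|_O$ is indeed a Finsler norm, completing the proof of the corollary.
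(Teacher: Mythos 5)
Your proposal is correct and follows essentially the same route as the paper, which obtains the corollary precisely ``by differentiating in \eqref{eq:dist-DI-normal}'' along a radial curve; your explicit reduction to diagonal $\xi$ via the isotropy action $\xi\mapsto U\xi V^*$ and the singular value decomposition is exactly the (implicit) normalization the paper relies on. The extra care you take in justifying that the one-sided derivative $\lim_{t\to 0^+}\tfrac{1}{t}d_D(0,x(t))$ is the Finsler norm only makes explicit what the paper leaves as a ``straightforward consequence.''
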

Corollary \eqref{coro:infinitesimal-metric-I} implies directly that the Hilbert metric is neither the Bergman nor the Carathéodory metric, since these two metrics respectively correspond to the $L^2$ and $L^\infty$ infinitesimal metrics: 
\begin{itemize}
 \item the Carathéodory infinitesimal metric gives ${\vert\vert \xi\vert\vert_C}_O = \max\{\sigma_1,\cdots, \sigma_p\}$ (see \cite{Kobayashi,Suzuki}).
 \item the Bergman infinitesimal metric at the origin is given by a multiple of ${\vert\vert \xi\vert\vert_B}_O = \sqrt{\sigma^2_1 +\cdots + \sigma^2_p}$ depending on normalizations (see \cite{Morita}).
\end{itemize}
This finishes the proof of Theorem \ref{thm:main} in the case of the symmetric spaces $D_{p,q}^\I$.
 
\subsection{Proof of Theorem \ref{thm:main} for type $\II$ and $\III$}\label{sec:formula-type-II-III}

The situation for types $\II$ and $\III$ is analogous. Lemmas \ref{lem:pairs_in_D-II} and \ref{lem:pairs_in_D-III}  provide a normalized form for pairs of points which allows an explicit computation in terms of singular values. The following proposition sums-up the results in that case.
\begin{proposition}\label{prop:Finsler-type-II-III}
Assume $D=D^\II_n$ or $D^\III_n$.  If the singular values of $x\in D$ are 
$1> \sigma_1\geq \cdots \geq \sigma_n \geq 0$, then:
$$d_D(0,x) = \sum_{i=1}^n \ln\left(\frac{\sigma_i+1}{1-\sigma_i}\right).$$
For $\xi\in T_0\Omega\simeq M_n$ a tangent vector at the origin $0\in D$, with singular values $\sigma_1\geq\cdots \geq\sigma_n\geq 0$, the Finsler norm of $\xi$ at the origin is
$$
\vert\vert \xi\vert\vert_O = 2(\sigma_1+\cdots + \sigma_n).
$$
\end{proposition}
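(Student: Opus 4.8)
The plan is to reduce both assertions to the Type $\I$ computation of Proposition~\ref{prop:dist-DI-normal}, exploiting that the embeddings $E,F$ for types $\II$ and $\III$ are the restrictions of the Type $\I$ embeddings (Proposition~\ref{prop:proj-embedding}). By invariance of $d_D$ together with Lemmas~\ref{lem:pairs_in_D-II} and~\ref{lem:pairs_in_D-III}, I may assume $x$ is in normalized form: for Type $\III$, $x=\Sigma_{n,n}(\sigma_1,\dots,\sigma_n)$; for Type $\II$, $x$ is block-diagonal with blocks $w(\sigma_1),\dots,w(\sigma_p)$ (with a trailing $0$ when $n$ is odd). In each case $\partial_S D\subset\overline{D^\I_{n,n}}$, so Proposition~\ref{prop:duality-explicit} still gives $F_Z(E(x))=\det(I_n-xZ^*)$; hence the cross-ratio equals $\det(I_n-xZ'^*)/\det(I_n-xZ^*)$, and maximizing its modulus amounts to maximizing $|\det(I_n-xZ^*)|$ in one slot and minimizing it in the other, over $Z$ in the relevant Shilov boundary.

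The engine will be the following consequence of the double inequality~\eqref{eq:double}, valid for \emph{any} $x\in\overline{D^\I_{n,n}}$ with singular values $\sigma_1,\dots,\sigma_n$ and any unitary $Z$: one has $\prod_i(1-\sigma_i)\le|\det(I_n-xZ^*)|\le\prod_i(1+\sigma_i)$. Indeed, writing $x=U\Sigma V^*$ and using $\det(I-AB)=\det(I-BA)$, the quantity $|\det(I_n-xZ^*)|$ has, as $Z$ runs over the unitaries, the same range as $|\det(I_n-\Sigma Z^*)|$, to which~\eqref{eq:double} applies; thus the bound depends only on the singular values of $x$. For Type $\III$ the matrix $x$ is already in Type $\I$ normal form, $\partial_S D^\III_n\subset\partial_S D^\I_{n,n}$, and the extreme values $\prod_i(1\pm\sigma_i)$ are attained at $Z=\mp I_n$, which are symmetric and therefore lie in $\partial_S D^\III_n$; this reproduces $d_D(0,x)=\sum_{i=1}^n\ln\frac{1+\sigma_i}{1-\sigma_i}$ verbatim from Type $\I$. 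For Type $\II$ with $n$ even one has $\partial_S D^\II_n\subset\partial_S D^\I_{n,n}$, so the same bounds hold; a blockwise computation, using $w(\sigma_j)\,w(z)^*=\sigma_j\bar z\,I_2$ so that $|\det(I_2-w(\sigma_j)w(z)^*)|=|1-\sigma_j\bar z|^2$ has extreme values $(1\pm\sigma_j)^2$ at $z=\mp1$, shows that $Z=\mp Z_0\in\partial_S D^\II_n$ attain $\prod_j(1\pm\sigma_j)^2$. As the singular values of $x$ are then $\sigma_1,\sigma_1,\dots,\sigma_p,\sigma_p$, this is again $\sum_{i=1}^n\ln\frac{1+\sigma_i}{1-\sigma_i}$.

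The one case outside the scope of restriction is Type $\II$ with $n$ odd, where $Z$ is no longer unitary (indeed $ZZ^*=\mathrm{diag}(1,\dots,1,0)$) and $\partial_S D^\II_n\not\subset\partial_S D^\I_{n,n}$; I expect this parity reduction to be the only genuinely new point. The observation is that $ZZ^*=\mathrm{diag}(1,\dots,1,0)$ forces the last row of $Z$ to vanish, hence by antisymmetry also its last column, so $Z=\left(\begin{smallmatrix}\tilde Z&0\\0&0\end{smallmatrix}\right)$ with $\tilde Z\in\partial_S D^\II_{n-1}$. Since the normalized $x$ has the matching form $\left(\begin{smallmatrix}\tilde x&0\\0&0\end{smallmatrix}\right)$, the determinant factors as $\det(I_n-xZ^*)=\det(I_{n-1}-\tilde x\tilde Z^*)$, reducing the optimization to the even case $n-1$ already treated. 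The extra singular value $0$ contributes the trivial factor $\ln\frac{1+0}{1-0}=0$, so $\sum_{i=1}^n\ln\frac{1+\sigma_i}{1-\sigma_i}$ persists.

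Finally, the Finsler norm would follow by differentiation exactly as in Corollary~\ref{coro:infinitesimal-metric-I}. Since the distance formula holds for every $x\in D$ in terms of its singular values (the stabilizer of $0$ preserves both the singular values and $d_D$), for a tangent vector $\xi$ with singular values $\sigma_1,\dots,\sigma_n$ one gets $d_D(0,t\xi)=\sum_i\ln\frac{1+t\sigma_i}{1-t\sigma_i}=2t\sum_i\sigma_i+O(t^3)$ as $t\to0^+$, whence $\vert\vert\xi\vert\vert_O=2(\sigma_1+\cdots+\sigma_n)$.
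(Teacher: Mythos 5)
Your proof is correct and follows essentially the same route as the paper's: normalize the pair via Lemmas \ref{lem:pairs_in_D-II} and \ref{lem:pairs_in_D-III}, invoke the Type $\I$ bounds \eqref{eq:double} (extended to non-diagonal $x$ by unitary invariance), and check that they are attained inside the smaller Shilov boundaries at $Z=\mp I_n$ for Type $\III$ and $Z=\mp Z_0$ for Type $\II$. You even supply a justification the paper's sketch leaves implicit: for Type $\II$ with $n$ odd the Shilov boundary matrices are not unitary, so \eqref{eq:double} does not apply directly, and your reduction (vanishing last row and column, hence $\det(I_n-xZ^*)=\det(I_{n-1}-\tilde x\tilde Z^*)$ with $\tilde Z\in\partial_S D^{\II}_{n-1}$) is exactly the missing step.
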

Here again, the expression of the Finsler norm shows that the metric 
is different from both the Bergman or Carathéodory ones (see the discussion 
at the end of the previous section).

\begin{proof}
The proof follows along the same lines as for Proposition 
\ref{prop:dist-DI-normal} and Lemma \ref{coro:infinitesimal-metric-I}. 
One verifies that the values of the determinants are as follows.
\begin{itemize}
	\item For type $\III$, not only is $\partial_S D^\III \subset \partial_S D^\I$, 
    but also $\partial_S D^\III$ contains the two points $Z_-$ and $Z_+$ 
    defined in the proof of Proposition \ref{prop:dist-DI-normal}. They are 
    clearly the arguments of the maximum for $D^\III$ as well.
	\item for type $\II$,  $Z_-$ and $Z_+$ do not anymore belong to the 
    Shilov boundary, but one can use instead $Z_\pm = \pm Z_0$, where $Z_0$ 
    is the block diagonal matrix with blocks 
    $\left(\begin{smallmatrix} 0 & -1\\
         1 & 0\end{smallmatrix}\right)$, 
    as in the definition of $D^\II$, see \eqref{eq:type-II}.
\end{itemize}
\end{proof}
Theorem \ref{thm:main} is then proven for Types $\II$ and $\III$.

\subsection{Proof of Theorem \ref{thm:main} for type $\IV$}
\label{sec:formula-type-IV}
We now turn to the last Type $\IV$. We follow the same strategy: we compute 
the distance between any two points by considering the normalized form 
given by Lemma \ref{lem:pairs_in_D-IV}. Recall that $(e_i)_{i=1}^{n+2}$ 
is the canonical basis of $\C^{n+2}$. We are going to use the homogeneous 
coordinates coming from the basis 
$$
    \left(e_1,\ldots,e_n,\frac{e_{n+1}-ie_{n+2}}{\sqrt{2}},
        \frac{e_{n+1}-ie_{n+2}}{\sqrt{2}}
    \right)
$$ 
in which the form $q$ is given by the matrix
$$ J=\begin{bmatrix}I_n& &  \\& & -1\\ & -1 &  \end{bmatrix}.$$
Recall that if $z\in\partial_S(D^{\IV})$ is a point in the Shilov boundary 
and $w\in D^{\IV}$, then  equation \eqref{eq:duality-F-IV} gives a formula for
$F_z(E(w))$.

The following Lemma sums-up the necessary quantities to compute the distance 
between two (normalized) points. The proof is straightforward using the 
descriptions of the Harish-Chandra embeddings at the end of Section 
\ref{section:typeIV}, together with Equation 
\eqref{eq:duality-F-IV}.
\begin{lemma}\label{lem:values}
Let $E$ and $F$ be the maps described by \eqref{eq:def-E-IV} and \eqref{eq:def-F-IV}.  
\begin{enumerate}
 \item The image of the origin $0\in\C^n$ is  
 $$E(0)=[0:\ldots:0:0:1]^T.$$
 \item If $x=(x_1,ix_2,0,\ldots,0)$ with $x_i\in\R$, then 
 $$E(x) = \Bigl[x_1:ix_2:0:\ldots:0:\frac12(x_1^2-x_2^2):1\Bigr]^T.$$
 \item Let $v=e^{i\theta}(v_1,\ldots,v_n)$, where $v_i\in\R$, 
 be a point in the Shilov boundary of $D^{\IV}$. Then
 $$E(v)=\Bigl[e^{i\theta}v_1:\ldots:e^{i\theta}v_n:e^{2i\theta}:1\Bigr]^T.$$
 \item Moreover, the values of linear forms associated to $v$ at the normalized 
 pair of points are given by:
 \begin{eqnarray*}
  F_v(E(0)) & =& -e^{-2i\theta}\\
    F_v(E(x)) & =& 
        e^{-i\theta}\left(v_1x_1+iv_2x_2-\dfrac{x_1^2-x_2^2}{2}e^{i\theta}-e^{-i\theta}\right)
 \end{eqnarray*}
\end{enumerate}
\end{lemma}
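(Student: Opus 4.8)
The plan is to prove all four items by direct substitution into the explicit descriptions of $E$ and $F$ obtained in Section \ref{section:typeIV}; there is no conceptual difficulty, only bookkeeping of conjugations and of the Shilov normalization. Concretely, I would rely on the definition \eqref{eq:def-E-IV} of $E$, in which the extra coordinate is $z_{n+1}=\tfrac{1}{2}\sum_i z_i^2$, and on the pairing formula \eqref{eq:duality-F-IV} for $F_z(E(z'))$, taking $z$ to be the boundary point $v$ (so that $F_z=F_v$) and $z'$ the interior point.

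First I would dispatch items (1)--(3), which only require computing the $(n+1)$-st homogeneous coordinate. For $z=0$ one has $z_{n+1}=0$, giving item (1). For $x=(x_1,ix_2,0,\ldots,0)$ one computes $z_{n+1}=\tfrac{1}{2}(x_1^2+(ix_2)^2)=\tfrac{1}{2}(x_1^2-x_2^2)$, giving item (2). Item (3) is the only one using more than the bare definition: for $v=e^{i\theta}(v_1,\ldots,v_n)$ with $v_i\in\R$, the defining condition \eqref{eq:type-IV} of the Shilov boundary forces $\sum_k v_k^2=2$, whence $z_{n+1}=\tfrac{1}{2}e^{2i\theta}\sum_k v_k^2=e^{2i\theta}$, yielding the claimed homogeneous coordinates.

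For item (4) I would substitute into \eqref{eq:duality-F-IV} with $z=v$. Since the $v_k$ are real, the relevant conjugates are $\overline{z_k}=e^{-i\theta}v_k$ for $k\le n$ and $\overline{z_{n+1}}=e^{-2i\theta}$. Taking $z'=0$ kills every term except $-\overline{z_{n+1}}$, giving $F_v(E(0))=-e^{-2i\theta}$. Taking $z'=x$ with the coordinates from item (2) gives
$$F_v(E(x))=-\Bigl(e^{-2i\theta}+\tfrac{1}{2}(x_1^2-x_2^2)\Bigr)+e^{-i\theta}\bigl(v_1x_1+iv_2x_2\bigr),$$
and factoring out $e^{-i\theta}$ produces the stated expression.

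The hard part here is essentially clerical: one must keep the roles of the two slots in \eqref{eq:duality-F-IV} straight (the boundary point sits in the conjugated slot), remember that only the first two coordinates of the normalized $x$ are nonzero, and invoke the Shilov normalization $\sum_k v_k^2=2$ at exactly the right moment in item (3). Once these conventions are fixed, the four identities follow by inspection.
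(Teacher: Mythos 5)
Your proof is correct and follows exactly the route the paper takes: the paper also declares the lemma "straightforward" from the definition \eqref{eq:def-E-IV} of $E$ and the pairing formula \eqref{eq:duality-F-IV}, with the Shilov normalization $\sum_k v_k^2=2$ entering precisely where you invoke it. All four substitutions check out, including the factoring of $e^{-i\theta}$ in item (4).
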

In order to compute the value of the Hilbert distance we need to estimate $|F_z(E(x))|$.
\begin{lemma}\label{lem:estim-Fz(Ex)}
Let $z_-$ and $z_+$ be the two points  in the Shilov boundary given by $z_\pm= (\pm\sqrt{2},0,\ldots,0)$. Then, for any $z\in \partial_S D^\IV$ and any $x = (x_1,ix_2,0,\ldots)\in D^{\IV}$ with $x_1\geq x_2$, we have
\[ 1+\frac{x_1^2-x_2^2}{2} -\sqrt{2}x_1 = |F_{z_-} E(x)| \leq |F_{z} E(x)| \leq |F_{z_+} E(x)| = 1+\frac{x_1^2-x_2^2}{2} +\sqrt{2}x_1.\]
\end{lemma}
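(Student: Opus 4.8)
The plan is to turn the statement into an explicit optimization over the Shilov boundary using the closed formula for $F_z(E(x))$ from Lemma \ref{lem:values}. Parametrize a Shilov point as $z=e^{i\theta}(v_1,\dots,v_n)$ with $v_k\in\R$ and $\sum_k v_k^2=2$, and set $c=\tfrac12(x_1^2-x_2^2)\ge 0$. Because $x=(x_1,ix_2,0,\dots,0)$ has only two nonzero entries, Lemma \ref{lem:values}(4) shows that only $v_1,v_2$ enter, so after discarding the unit factor $e^{-i\theta}$ I must study
\[
    |F_z(E(x))|=\bigl|v_1x_1+iv_2x_2-ce^{i\theta}-e^{-i\theta}\bigr|
\]
as $\theta$ ranges over $\R$ and $(v_1,v_2)$ over the disc $v_1^2+v_2^2\le 2$ (the remaining coordinates only absorb the norm constraint and impose nothing further). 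Splitting into real and imaginary parts gives the working form
\[
    |F_z(E(x))|^2=\bigl[v_1x_1-(1+c)\cos\theta\bigr]^2+\bigl[v_2x_2+(1-c)\sin\theta\bigr]^2 .
\]

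For the upper bound I would use the triangle inequality: since $x_1\ge x_2$, one has $|v_1x_1+iv_2x_2|\le x_1\sqrt{v_1^2+v_2^2}\le\sqrt2\,x_1$, hence $|F_z(E(x))|\le\sqrt2\,x_1+c+1$. This is saturated at $\theta=0$, $(v_1,v_2)=(-\sqrt2,0)$, where the three summands are real and aligned; evaluating at $\theta=0$, $(v_1,v_2)=(\sqrt2,0)$ instead gives modulus $|\sqrt2\,x_1-1-c|=1+c-\sqrt2\,x_1$. Thus the two real Shilov points $\pm(\sqrt2,0,\dots,0)$ realize the two extreme values $1+c\mp\sqrt2\,x_1$, the maximum being settled already; the remaining task is the lower bound $|F_z(E(x))|\ge 1+c-\sqrt2\,x_1$.

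The essential input for the lower bound is the defining inequality of $D^\IV$, which for the normalized point reads $x_1^2+x_2^2<1+c^2$. Writing $A=x_1^2+x_2^2\;(<2)$ and $B=x_1x_2$ and using $(x_1^2-x_2^2)^2=A^2-4B^2$, this rearranges to $B^2<(1-\tfrac A2)^2$, i.e. $(x_1+x_2)^2=A+2B<2$. Consequently $1+c-\sqrt2\,x_1=\tfrac12(\sqrt2-x_1-x_2)(\sqrt2-x_1+x_2)>0$ and, symmetrically, $\sqrt2\,x_2<1-c$; in particular $(1+c)^2>2x_1^2$ and $(1-c)^2>2x_2^2$. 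These last two bounds force, for every $\theta$, the unconstrained minimizer $\bigl(\tfrac{(1+c)\cos\theta}{x_1},-\tfrac{(1-c)\sin\theta}{x_2}\bigr)$ of the quadratic above to have squared norm $\tfrac{(1+c)^2}{x_1^2}\cos^2\theta+\tfrac{(1-c)^2}{x_2^2}\sin^2\theta>2$, hence to lie outside the disc; so for each $\theta$ the minimum is attained on the circle $v_1^2+v_2^2=2$ (and $|F_z(E(x))|$ never vanishes, recovering admissibility). Writing $v_1=\sqrt2\cos\psi$, $v_2=\sqrt2\sin\psi$, the lower bound becomes the single two-variable inequality
\[
    \bigl[\sqrt2\,x_1\cos\psi-(1+c)\cos\theta\bigr]^2+\bigl[\sqrt2\,x_2\sin\psi+(1-c)\sin\theta\bigr]^2\ge(1+c-\sqrt2\,x_1)^2,
\]
which I would close by a stationarity analysis showing every critical point forces $\sin\psi\sin\theta=0$ and that the global minimum sits at $\psi=\theta=0$.

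The main obstacle is exactly this last minimization. A naive reverse triangle inequality is too weak, because as $\theta\to\pi/2$ the modulus $|ce^{i\theta}+e^{-i\theta}|$ decreases from $1+c$ to $1-c$, so $|F_z(E(x))|$ cannot be bounded below by $|ce^{i\theta}+e^{-i\theta}|-\sqrt2\,x_1$. The geometric content, provided by the domain inequality, is that the ellipse swept by $ce^{i\theta}+e^{-i\theta}$ (semi-axes $1+c$ and $1-c$) strictly encloses the ellipse of attainable values $v_1x_1+iv_2x_2$ (semi-axes $\sqrt2\,x_1$ and $\sqrt2\,x_2$), and that among all gaps between the two the one along the shared major axis, namely $1+c-\sqrt2\,x_1$, is the smallest; this last comparison is precisely where $x_1+x_2<\sqrt2$ (equivalently $(1+c)-\sqrt2\,x_1<(1-c)-\sqrt2\,x_2$) is used. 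Converting this enclosure picture into the displayed inequality, rather than merely comparing the two axial gaps, is the one genuinely computational point.
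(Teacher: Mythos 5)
Your strategy is the same as the paper's: both proofs use Lemma \ref{lem:values} to reduce the claim to estimating $|V_1-V_2|$, where $V_1=v_1x_1+iv_2x_2$ ranges over the filled ellipse $\mathcal{E}_1$ with semi-axes $\sqrt2\,x_1,\sqrt2\,x_2$ and $V_2$ ranges over the ellipse $\mathcal{E}_2$ with semi-axes $1+c,1-c$; both extract from the defining inequalities of $D^\IV$ the nesting conditions $\sqrt2\,x_1<1+c$ and $\sqrt2\,x_2<1-c$; and both compare the two axial gaps via $x_1+x_2<\sqrt2$. Your upper bound is complete and correct, and your sign bookkeeping is the right one: with the representative of Lemma \ref{lem:values}, the point $(-\sqrt2,0,\ldots,0)$ realizes the maximum $1+c+\sqrt2\,x_1$ and $(+\sqrt2,0,\ldots,0)$ the minimum, so the labels $z_\pm$ in the statement are in fact swapped; your assignment agrees with the paper's own proof body.

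The genuine gap is the lower bound, i.e.\ exactly the step you defer to ``a stationarity analysis showing every critical point forces $\sin\psi\sin\theta=0$''. That claim is unproved, and at the level of generality your argument operates at --- concentric, coaxial, nested ellipses --- it is false: take the inner ellipse with semi-axes $(1.4,\epsilon)$, $\epsilon$ small, inside the outer one with semi-axes $(2,1)$; the minimal distance between the two curves is about $\sqrt{1-1.96/3}\approx 0.589$, attained at an off-axis pair, and is strictly smaller than both axial gaps $0.6$ and $1-\epsilon$ (the inner vertex $(1.4,0)$ lies inside the evolute of the outer ellipse, whose cusps sit at $(\pm 1.5,0)$). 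So nesting plus the axial-gap comparison, which is all your plan invokes, cannot suffice; some feature specific to these two ellipses must enter. (To be fair, the paper's own proof makes the same leap, asserting without justification that the minimum is attained along one of the axes.) The feature that actually saves the lemma is that $\mathcal{E}_1$ and $\mathcal{E}_2$ are \emph{confocal}: $(\sqrt2\,x_1)^2-(\sqrt2\,x_2)^2=2(x_1^2-x_2^2)=4c=(1+c)^2-(1-c)^2$. Granting this, your displayed inequality has a short proof: write $a_1=\sqrt2\,x_1$, $b_1=\sqrt2\,x_2$, $a_2=1+c$, $b_2=1-c$, $f^2=4c$, substitute $\theta\mapsto-\theta$, and set $u=\cos(\psi-\theta)$, $v=\cos(\psi+\theta)$, so that $2\cos\psi\cos\theta=u+v$, $2\sin\psi\sin\theta=u-v$ and $\sin^2\psi+\sin^2\theta=1-uv$; then the difference of the two sides of your inequality equals $(2a_1a_2-f^2)-(a_1a_2+b_1b_2)u-(a_1a_2-b_1b_2)v+f^2uv$, a bilinear function on $[-1,1]^2$ (this is precisely where equality of the two focal distances is used), whose minimum is attained at a corner; the four corner values are $0$, $2(a_1a_2-b_1b_2-f^2)$, $2(a_1a_2+b_1b_2-f^2)$ and $4a_1a_2$, all nonnegative since $a_1a_2=\sqrt{(b_1^2+f^2)(b_2^2+f^2)}\geq b_1b_2+f^2$ by Cauchy--Schwarz. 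Adding this (together with a word on the degenerate case $x_2=0$, where your unconstrained-minimizer formula divides by $x_2$) would turn your outline into a complete proof.
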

\begin{proof}
Let us denote by $V_1$ and $V_2$ the quantities 
\begin{eqnarray*}
V_1&=&v_1x_1+iv_2x_2\\
V_2 &=& e^{i\theta}+e^{-i\theta}\dfrac{x_1^2-x_2^2}{2}\\
&=&\cos\theta\left(1+\dfrac{x_1^2-x_2^2}{2}\right)+
    i\sin(\theta)\left(1-\dfrac{x_1^2-x_2^2}{2}\right) 
\end{eqnarray*}
We have seen in Lemma \ref{lem:values} that $F_z(E(x))=e^{-i\theta}\left(V_1-V_2\right)$,
so that we need to estimate $|V_1-V_2|$. Remark that
\begin{itemize}
    \item On the one hand, since $v$ belongs to the Shilov boundary, so that $(v_1,v_2)$ 
    verify $v_1^2+v_2^2\leq 2$. Hence, the point $V_1$ ranges over of the ellipse 
    $\mathcal{E}_1$ which is centered at $0$, and whose axes are horizontal (with half-length $x_1\sqrt{2}$) and vertical (with half-length $x_2\sqrt{2}$).
    \item On the other hand, as $\theta$ varies, $V_2$ ranges over the ellipse 
    $\mathcal{E}_2$ with horizontal axis having half-length 
    $1+(x_1^2-x_2^2)/2$ and vertical axis having half-length  
    $1-(x_1^2-x_2^2)/2$ (see Figure \ref{fig:minmax}). 
    \item The fact that $x\in \mathcal{O}$ gives 
    $$x_1^2+x_2^2<1+\left(\dfrac{x_1^2-x_2^2}{2}\right)^2,$$
    which implies in turn that
    \[
    \begin{aligned}2x_1^2 & = (x_1^2+x_2^2) +(x_1^2-x_2^2)\\ & < 1+\left(\dfrac{x_1^2-x_2^2}{2}\right)^2+(x_1^2-x_2^2)\\ & = \left(1+\dfrac{x_1^2-x_2^2}{2}\right)^2.
    \end{aligned}
    \]
\end{itemize}
As a consequence we have 
$$\sqrt{2}x_1<1+\dfrac{x_1^2-x_2^2}{2},
$$
and analogously we obtain
$$
\sqrt{2}x_2<1-\dfrac{x_1^2-x_2^2}{2}.
$$
This means that $\mathcal{E}_1$ is contained in the interior of 
$\mathcal{E}_2$. The maximum of $|V_1-V_2|$ is thus attained when 
$v_1=\sqrt{2}, v_2=0$ and $\theta=\pi$ (which correspond to two marked 
black points on Figure \ref{fig:minmax}).

The minimum value of $|V_1-V_2|$ is attained either along the horizontal 
axis (for $v_1 = \sqrt{2}$, $v_2 = 0$, $\theta = 0$), or along the vertical 
axis (for $v_1 = 0$, $v_2 = \sqrt{2}$, $\theta = \pi/2$). 
\begin{itemize}
 \item In the first case, the minimum equals $A=1+\dfrac{x_1^2-x_2^2}{2}-x_1\sqrt{2}$, 
 which  after a little rewriting is seen to be equal to 
 $$\left(1-\dfrac{x_1+x_2}{\sqrt{2}}\right)\left(1-\dfrac{x_1-x_2}{\sqrt{2}}\right).$$
 \item In the second case the minimum is equal to $B=1-\dfrac{x_1^2-x_2^2}{2}-x_2\sqrt{2}$, 
 which is equal to 
  $$\left(1-\dfrac{x_1+x_2}{\sqrt{2}}\right)\left(1+\dfrac{x_1-x_2}{\sqrt{2}}\right).$$
\end{itemize}
The last two quantities are positive as values of $|V_1-V_2|$. 
Together with $x_1\geqslant x_2$, this implies that 
$1-(x_1+x_2)/\sqrt{2}\geqslant 0$. Now, the difference is
$$A-B =-\sqrt{2} \left(1-\dfrac{x_1+x_2}{\sqrt{2}}\right)(x_1-x_2)<0$$
So the minimum is attained for $v_1 = \sqrt{2}$, $v_2 = 0$, $\theta = 0$ 
and is equal to $A$.
\end{proof}

\begin{figure}
\begin{center}
\scalebox{0.4}{\includegraphics{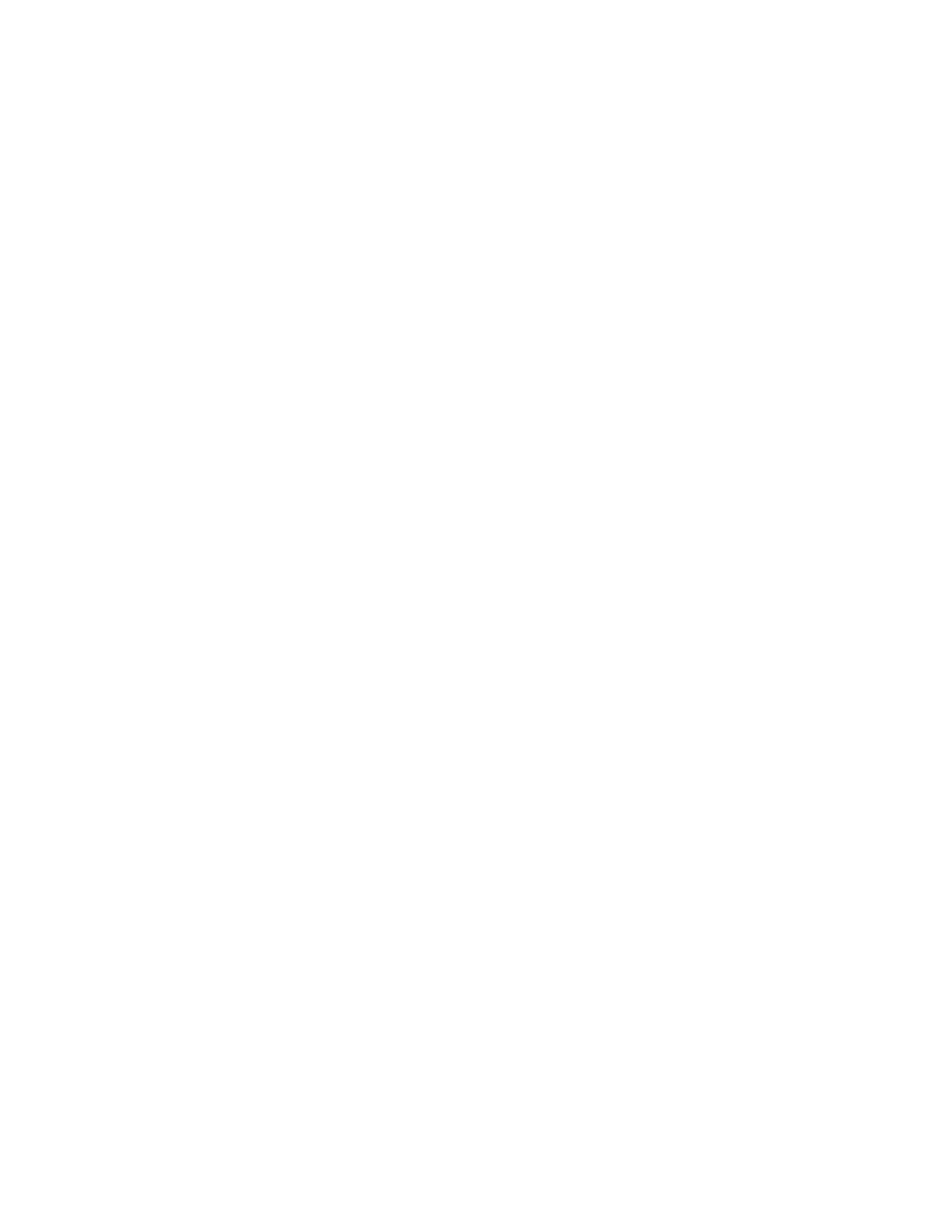}}
\end{center}
\caption{The two ellipses $\mathcal{E}_1$ and $\mathcal{E}_2$ and the min and max values for $|V_1-V_2|$.\label{fig:minmax}}
\end{figure}

\begin{proposition}\label{prop:dist-DIV}
	Assume $D=D^\IV_n$. Let $x\in D$ be such that 
    $x = (x_1,ix_2,0\ldots,0)$ with $x_1\geq x_2\geq 0$. Then we have:
	\begin{equation}\label{eq-dist-IV}
        d_D(0,x') = \ln\left(\frac{1+\frac{x_1^2-x_2^2}{2} +
            \sqrt{2}x_1}{1+\frac{x_1^2-x_2^2}{2} -\sqrt{2}x_1}\right).
    \end{equation}
		
	For $\xi = (\xi_1,i\xi_2,0,\ldots,0)\in T_0D$ (with $\xi_1, \xi_2 \in \R$) a tangent 
    vector at the origin $0\in D^\IV_n$, the Finsler norm of $\xi$ at 
    the origin is
	$$
	\vert\vert \xi\vert\vert_0 = 2\sqrt 2 \max(\vert \xi_1\vert,\vert\xi_2\vert).
	$$
\end{proposition}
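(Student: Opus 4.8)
The plan is to combine the two preceding lemmas into the distance formula \eqref{eq-dist-IV}, and then to read off the Finsler norm by a first-order expansion together with an invariance argument.

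First I would write the Hilbert distance between the normalized pair $(0,x)$ explicitly. By definition,
\[
    d_D(0,x)=\ln\left(\max_{z,z'\in\partial_S D^\IV}\left|\frac{F_z(E(0))\,F_{z'}(E(x))}{F_z(E(x))\,F_{z'}(E(0))}\right|\right).
\]
The key simplification is Lemma \ref{lem:values}(4): since $F_v(E(0))=-e^{-2i\theta}$ has modulus $1$ for every $v=e^{i\theta}(v_1,\dots,v_n)$ in the Shilov boundary, the two factors built from $E(0)$ cancel and the modulus of the cross-ratio reduces to $|F_{z'}(E(x))|/|F_z(E(x))|$. As $z$ and $z'$ vary independently, the maximum of this ratio is the maximum of the numerator divided by the minimum of the denominator, and both are provided by Lemma \ref{lem:estim-Fz(Ex)}: the numerator is maximal at $z_+$ with value $1+\tfrac{x_1^2-x_2^2}{2}+\sqrt2\,x_1$, and the denominator is minimal at $z_-$ with value $1+\tfrac{x_1^2-x_2^2}{2}-\sqrt2\,x_1$. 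Substituting and taking logarithms gives \eqref{eq-dist-IV} at once; in particular the value is strictly positive when $x\neq 0$, so $d_D$ separates points in type $\IV$ as well.

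Next I would extract the Finsler norm along the radial curve $t\mapsto t\xi$, whose velocity at the origin is $\xi$. Assuming first the normalized ordering $\xi_1\geq\xi_2\geq 0$, the point $t\xi=(t\xi_1,it\xi_2,0,\dots,0)$ satisfies $t\xi_1\geq t\xi_2\geq 0$ for small $t>0$, so \eqref{eq-dist-IV} applies with $x_1=t\xi_1$, $x_2=t\xi_2$. A first-order expansion gives
\[
    d_D(0,t\xi)=\ln\frac{1+\sqrt2\,t\xi_1+O(t^2)}{1-\sqrt2\,t\xi_1+O(t^2)}=2\sqrt2\,\xi_1\,t+O(t^2),
\]
the quadratic contribution $\tfrac{t^2}{2}(\xi_1^2-\xi_2^2)$ being swept into the error term. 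Dividing by $t$ and letting $t\to 0^+$ yields $\|\xi\|_0=2\sqrt2\,\xi_1=2\sqrt2\max(|\xi_1|,|\xi_2|)$ in this case.

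Finally I would drop the ordering and sign hypotheses by invariance. The Finsler norm at the origin is invariant under the isotropy action of $\mathrm{Stab}(0)=\SO(n)\times\SO(2)$ on $T_0D^\IV_n\simeq\C^n$, since $d_D$ is $\mathrm{Aut}(D)$-invariant and the stabilizer acts linearly. Writing $\xi=(\xi_1,i\xi_2,0,\dots,0)=a+ib$ with $a=(\xi_1,0,\dots)$ and $b=(0,\xi_2,0,\dots)$ orthogonal, the circle action $\xi\mapsto e^{i\theta}\xi$ followed by a rotation (and, if needed, a sign change of two coordinates) in $\SO(n)$ carries $\xi$ to the normalized form $(\sigma_1,i\sigma_2,0,\dots,0)$ with $\sigma_1=\max(|\xi_1|,|\xi_2|)\geq\sigma_2=\min(|\xi_1|,|\xi_2|)\geq 0$; this is exactly the infinitesimal counterpart of the normalization in Lemma \ref{lem:pairs_in_D-IV}. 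By invariance together with the previous computation, $\|\xi\|_0=2\sqrt2\,\sigma_1=2\sqrt2\max(|\xi_1|,|\xi_2|)$, as stated. \textbf{The main obstacle} lies precisely in this last step: whereas the distance formula follows mechanically from Lemmas \ref{lem:values} and \ref{lem:estim-Fz(Ex)}, one must verify explicitly that the circle and $\SO(n)$ moves realize both the reordering $|\xi_1|\leftrightarrow|\xi_2|$ and the sign normalizations; everything else is a one-line Taylor expansion.
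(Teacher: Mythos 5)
Your proposal is correct and follows essentially the same route as the paper: the distance formula is obtained from Lemma \ref{lem:estim-Fz(Ex)} (with the factors $|F_v(E(0))|=1$ from Lemma \ref{lem:values} dropping out of the cross-ratio), and the Finsler norm by differentiating $d_D(0,t\xi)$ along a radial curve after reducing to $\xi_1\geq\xi_2\geq 0$ by isotropy invariance, which is precisely the paper's (terser) argument. One cosmetic caveat on your explicit moves: for $n=2$ the single sign flip $(\xi_1,i\xi_2)\mapsto(\xi_1,-i\xi_2)$ is \emph{not} realized by $e^{i\theta}\SO(2)$ (it corresponds to swapping the two factors of the bidisc), so in that case one should invoke invariance of $d_D$ under the full projective stabilizer of $\bigl(E(D),F(\partial_S D)\bigr)$, which contains $\mathrm{O}(n)$-type maps such as $\mathrm{diag}(1,-1,1,\ldots,1)$, rather than only $\SO^0(2,n)$; for $n\geq 3$ your sign change of two coordinates works as stated.
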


\begin{proof}
The proof of Proposition \ref{prop:dist-DIV} is straightforward 
from Lemma \ref{lem:estim-Fz(Ex)}: if $x$ is as above, then the 
maximum of the cross-ratio is attained for $z_-=(-x_1\sqrt{2},0,\dots,0)$ 
and $z_+=(x_1\sqrt{2},0,\ldots,0)$  as in Lemma \ref{lem:estim-Fz(Ex)}. 
This gives the expression for the distance announced in \eqref{eq-dist-IV}. 
The Finsler norm is obtained  by differentiating the distance function.  It suffices to consider 
    $\xi_1\geq \xi_2\geq 0$, and we obtain in this case $\vert\vert \xi\vert\vert_0= 2\sqrt 2 \xi_1$.  This implies the formula.
\end{proof}

The last point to prove in the main theorem is that the metric is not equal to the Carathéodory metric.  This follows from the following lemma.

\begin{lemma}For $\xi = (\xi_1,i\xi_2,0,\ldots,0)\in T_0D$ (with $\xi_1, \xi_2 \in \R$) a tangent 
    vector at the origin $0\in D^\IV_n$, the Carathéodory  norm of $\xi$ at 
    the origin is
	$$
	\vert\vert \xi\vert\vert^{car}_0 = \frac{1}{\sqrt{2}} (\vert \xi_1\vert+\vert\xi_2\vert).
	$$
\end{lemma}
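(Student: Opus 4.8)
The plan is to reduce the computation of the Carath\'eodory norm at the origin to that of the Minkowski functional of the domain, exploiting that the Harish--Chandra realization $D^\IV_n$ is both convex and stable under the circle action $z\mapsto e^{i\theta}z$ (both recalled in Section~\ref{sec:BSD}). Recall that
$$\vert\vert \xi\vert\vert^{car}_0 = \sup\bigl\{\,|df_0(\xi)| : f\colon D^\IV_n\to\Delta \text{ holomorphic},\ f(0)=0\,\bigr\},\qquad \Delta=\{w\in\C:|w|<1\},$$
and write $\mu(\xi)=\inf\{t>0:\xi/t\in D^\IV_n\}$ for the Minkowski functional. The classical fact for balanced convex domains is that these two quantities agree; I would prove both inequalities directly.

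For the upper bound, observe first that for $\xi=(\xi_1,i\xi_2,0,\dots,0)$ the two defining inequalities of $D^\IV_n$ evaluated at $\lambda\xi$ depend only on $|\lambda|$, so $\{\lambda\in\C:\lambda\xi\in D^\IV_n\}$ is exactly the disc of radius $R=1/\mu(\xi)$. Given any admissible $f$, the composition $\phi(\lambda)=f(\lambda\xi)$ maps $R\Delta$ into $\Delta$ with $\phi(0)=0$, so the Schwarz lemma gives $|df_0(\xi)|=|\phi'(0)|\leq 1/R=\mu(\xi)$; taking the supremum yields $\vert\vert \xi\vert\vert^{car}_0\leq\mu(\xi)$.

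For the lower bound I would exhibit a complex-linear competitor. Since $D^\IV_n$ is convex and $s_{\max}\xi\in\partial D^\IV_n$ for $s_{\max}=1/\mu(\xi)$, a supporting hyperplane provides a real-linear $L\leq 1$ on $D^\IV_n$ with $L(s_{\max}\xi)=1$. Writing $L=\Re\ell$ for the unique complex-linear $\ell$ and invoking circle invariance (so that $\Re(e^{i\theta}\ell(z))\leq 1$ for all $\theta$ and all $z\in D^\IV_n$) upgrades this to $|\ell|\leq 1$ on $D^\IV_n$, forcing $\ell(s_{\max}\xi)=1$. Then $f=\ell$ is admissible (strict inequality on the open domain by the maximum principle) with $|df_0(\xi)|=|\ell(\xi)|=\mu(\xi)$, so $\vert\vert \xi\vert\vert^{car}_0\geq\mu(\xi)$, and equality holds.

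It then remains to compute $\mu(\xi)$, the only genuinely computational step. Taking $\xi_1\geq\xi_2\geq 0$ and setting $z=s\xi$, the conditions $z^*z<2$ and $z^*z<1+|\frac12 z^Tz|^2$ become
\begin{align*}
 s^2(\xi_1^2+\xi_2^2) &< 2,\\
 \frac14 s^4(\xi_1^2-\xi_2^2)^2 - s^2(\xi_1^2+\xi_2^2)+1 &> 0.
\end{align*}
Solving the second as a quadratic in $u=s^2$, and using $(\xi_1^2+\xi_2^2)^2-(\xi_1^2-\xi_2^2)^2=4\xi_1^2\xi_2^2$, its smaller root is $u=2/(\xi_1+\xi_2)^2$. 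The main (minor) obstacle is to verify that this quadric condition, rather than the ball condition $z^*z<2$, is the one cutting off the admissible segment; this follows from $(\xi_1+\xi_2)^2\geq\xi_1^2+\xi_2^2$. Hence $s_{\max}=\sqrt2/(\xi_1+\xi_2)$ and $\mu(\xi)=1/s_{\max}=(|\xi_1|+|\xi_2|)/\sqrt2$, giving $\vert\vert \xi\vert\vert^{car}_0=\frac{1}{\sqrt2}(|\xi_1|+|\xi_2|)$ as claimed.
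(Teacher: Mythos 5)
Your proof is correct, but it takes a genuinely different route from the paper's. The paper identifies $D^\IV_2$ with the bidisc $U^2$ via the linear map $(z,w)\mapsto\frac{1}{\sqrt2}(z+w,i(z-w))$ and transports the known Carath\'eodory metric of the bidisc, $\max(|\xi|,|\eta|)$, citing Kobayashi; you instead prove from first principles that for a bounded convex domain stable under the circle action the Carath\'eodory norm at the origin equals the Minkowski gauge --- Schwarz lemma on the disc slice $\{\lambda:\lambda\xi\in D^\IV_n\}$ for the upper bound, and for the lower bound a supporting hyperplane at $s_{\max}\xi\in\partial D^\IV_n$, upgraded to a complex-linear competitor of modulus at most one by averaging over the circle action --- and then you compute the gauge explicitly. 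What your approach buys: it is self-contained, it works directly on $D^\IV_n$ for every $n$ and thus avoids a step the paper leaves implicit, namely that the Carath\'eodory metric of the slice $D^\IV_2\subset D^\IV_n$ computes the ambient one (restriction of functions a priori only gives $\gamma_{D^\IV_2}\geq\gamma_{D^\IV_n}$ on vectors tangent to the slice); moreover the gauge description applies verbatim to the other families. What the paper's approach buys: brevity, and continuity with the bidisc example of Section \ref{ssec:bidisc} that anchors the whole paper. One minor point in your write-up: rotational invariance of the two defining inequalities does not by itself make the slice a disc, because the quadric inequality, read in $u=|\lambda|^2$, is satisfied again beyond its larger root $2/(\xi_1-\xi_2)^2$; this far branch must be ruled out, either by the convexity you invoke at the outset (which makes the set of admissible $s>0$ an interval), or by the one-line check $(\xi_1-\xi_2)^2\leq\xi_1^2+\xi_2^2$ showing the far branch violates the ball condition $z^*z<2$. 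Since you do invoke convexity, this is a remark, not a gap.
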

\begin{proof}

In order to prove the lemma  we  come back to the initial example: 
the bidisc $U^2$, see Section \ref{ssec:bidisc}. In fact, the bidisc $U^2$ 
identifies with the domain $D^{\IV}_2$ through the map $(z,w)\mapsto \frac{1}{\sqrt{2}}(z+w,i(z-w))$. The Hilbert metric for   $D^{\IV}_2$ given above coincides with the one for $U^2$ given in the introduction, up to a factor $2$. Indeed, a tangent vector $(\xi,\eta)$ at $(0,0)\in U^2$ is sent to the tangent vector  $\frac{1}{\sqrt{2}}(\xi+\eta,i(\xi-\eta))\in T_0D^\IV_2$. 

Given a tangent vector $(\xi,\eta)$ (with $\xi,\eta\in \R$) at $(0,0)$ in the bidisc, we obtain from the previous proposition, for its image $(\xi_1,i\xi_2)=\frac{1}{\sqrt{2}}(\xi+\eta,i(\xi-\eta))\in T_0D^\IV_2$ the value of its norm $2\sqrt 2 \max(\vert \xi_1\vert,\vert\xi_2\vert)=2\max(|\xi+\eta|,|\xi-\eta|)$, which equals $2(|\xi|+|\eta|)$.  

The Carathéodory metric on the bidisc
is, on the other hand different, and equal to $\max(|\xi|,|\eta |)$.  Using the argument of \cite{Kobayashi} section IV.2 example 2,  applying the same change of variables as above, the Carathéodory norm of $(\xi_1,i\xi_2,0,\ldots,0)\in T_0D$ is $\frac{1}{\sqrt{2}} (\vert \xi_1\vert+\vert\xi_2\vert)$.

\end{proof}

\bibliographystyle{alpha}
\bibliography{biblio}

\begin{flushleft}
  \textsc{E. Falbel, A. Guilloux\\
  Institut de Math\'ematiques de Jussieu-Paris Rive Gauche \\
CNRS UMR 7586 and INRIA EPI-OURAGAN \\
 Sorbonne Universit\'e, Facult\'e des Sciences \\
4, place Jussieu 75252 Paris Cedex 05, France \\}
 \verb|elisha.falbel@imj-prg.fr, antonin.guilloux@imj-prg.fr|
 \end{flushleft}
\begin{flushleft}
  \textsc{P. Will\\
  Univ. Grenoble Alpes, CNRS, IF, 38000 Grenoble, France}\\
  \verb|pierre.will@univ-grenoble-alpes.fr|
\end{flushleft}

\end{document}